\documentclass{amsart}
\usepackage{amssymb, mathrsfs, pb-diagram}
\title{Manifold structures in regular irreducible algebraic monoids}
\author{V. N. Krishnachandran}
\address{Vidya Academy of Science \& Technology, Thrissur - 680 501, Kerala, INDIA}
\keywords{regular semigroup, topological semigroup, algebraic monoid,  manifold}
\subjclass[2010]{20M17, 20M32, 58A05}
\email{krishnachandranvn.v.n@vidyaacademy.ac.in}
\newtheorem{lemma}{Lemma}[section]
\newtheorem{thm}[lemma]{Theorem}
\newtheorem{cor}[lemma]{Corollary}
\newtheorem{prop}[lemma]{Proposition}

\theoremstyle{remark}

\newcommand{\fldc}{\ensuremath{\mathbb{C}}}
\newcommand{\fldk}{\ensuremath{\mathbb{K}}}
\newcommand{\rng}[1][e]{\ensuremath{\mathsf{R}(#1)}} 
\newcommand{\nul}[1][e]{\ensuremath{\mathsf{N}(#1)}} 
\newcommand{\gk}[1][k]{\ensuremath{\mathsf{G}_{#1}}} 
\newcommand{\gl}[1][n]{\ensuremath{\mathsf{GL}(#1)}} 
\newcommand{\Gla}{\ensuremath{\mathsf{G}_a^l}}
\newcommand{\Gra}{\ensuremath{\mathsf{G}_a^r}}

\newcommand{\Gle}{\ensuremath{\mathsf{G}_e^l}}
\newcommand{\Gre}{\ensuremath{\mathsf{G}_e^r}}

\newcommand{\Rank}{\operatorname{Rank}}
\newcommand{\GL}{\,\mathscr L\,} 
\newcommand{\GR}{\,\mathscr R\,} 
\newcommand{\GD}{\,\mathscr D\,} 
\newcommand{\GJ}{\,\mathscr J\,} 
\newcommand{\GH}{\,\mathscr H\,} 
\newcommand{\Ga}{\Gamma}
\newcommand{\De}{\Delta}
\newcommand{\lro}{\rightarrow}
\newcommand{\Act}[1]{\mathbf{A}_{\mathbf {#1}}}
\newcommand{\Map}[1]{\phi_{#1}}
\newcommand{\Stb}[1]{S_{#1}}
\newcommand{\NAct}[2]{{\Act #1}{}_{,#2}}
\begin{document}
\begin{abstract}
In this paper we study regular irreducible algebraic
 monoids over $\fldc$ equipped with the euclidean
 topology. It is shown that, in such monoids, the Green classes and
 the spaces of idempotents in the Green classes 
 all have natural manifold structures. The
 interactions of these manifold structures and the
 semigroup structures in these monoids have been
 investigated. Relations between these manifolds and
 Grassmann manifolds have been established. A
 generalisation of a result on the dimension of the 
 manifold of rank $k$ idempotents in the semigroup of
 linear endomorphisms over $\fldc$  has been proved. 
\end{abstract}
\maketitle
\section{Introduction}
Let $K$ be an algebraically closed field. 
Let $K[x_1,\ldots,x_m]$ be the algebra of polynomials in
the indeterminates $x_1,\ldots,x_m$ over  $K$.  A subset $X$ of the affine $m$-space $K^m$
is said to be  
algebraic if it is the zero set of a collection of polynomials in
$K[x_1,\ldots,x_m]$ (Definition 1.2 \cite{Renn:05}). The set $X$ is  irreducible 
if it is not a union of two proper algebraic sets. 
A (linear) algebraic semigroup $S = (S, o)$ is an affine variety $S$ along
with an associative product map $o: S \times S \rightarrow S$ which is also a morphism of varieties (Definition 3.1 \cite{Putc:86}).
Identifying the elements of the multiplicative semigroup $M_n(K)$  
of $n\times n$ matrices over $K$ with the elements of $K^{n^2}$, we 
can define algebraic sets in $M_n(K)$. By an algebraic subsemigroup of $M_n(K)$ we mean a subsemigroup of $M_n(K)$ which is also an algebraic subset of the affine $n^2$-space $K^{n^2}$. 
It is known that an algebraic semigroup $S$
 (respectively,  an algebraic monoid $M$)  is isomorphic to an algebraic
subsemigroup (respectively, an algebraic submonoid) of $M_n(K)$ for some $n$ (see Theorem 3.15, Corollary 3.16 \cite{Putc:86}). So, in this paper, by an algebraic semigroup we shall always mean an algebraic subsemigrpup of $M_n(K)$ for some fixed $n$.  Algebraic monoids have been extensively studied by Mohan S. Putcha \cite{Putc:86}, Lex E. Renner \cite{Renn:05}, and others.

A topological semigroup \cite{carr}
is a Hausdorff  space $S$ together with an
associative binary operation  $\centerdot$ such that the product map 
$(x,y) \mapsto x\centerdot y$ is jointly continuous in $x$ and $y$. 
The usual topology on an algebraic set is the Zarisky topology, 
for which the the  closed sets are its algebraic subsets. An algebraic set with the usual topology  is not a Hausdorff space and so an algebraic semigroup cannot be 
a topological semigroup under the usual topology.
However in the special case where $K=\fldc$, the field of complex numbers,  there is another natural topology on algebraic sets, namely, the subspace topology inherited from the euclidean topology on $M_n(\mathbb C)$. 
Since this topology
is Hausdorff,  an algebraic semigroup over $\fldc$ is a  topological semigroup
under this topology (the joint continuity of the 
product map being obvious). 

It will be assumed throughout the rest of this paper that all algebraic semigroups are over the field $\fldc$. It will further be assumed  that all topological terms 
refer to the euclidean  topology. 
In this paper we study the topology of a regular irreducible 
algebraic monoid $M$. The special case where $M=M_n(\mathbb C)$ has been 
studied in \cite{kris:00}. We show here 
that many of the properties of $M_n(\mathbb C)$ 
can be generalised to $M$.

\subsection*{Notations}
For notations and terminology relating to semigroups we have followed \cite{Clif+:61} 
and for those connected with topology of manifolds we
have followed \cite{Dieu:72}. In particular the set of idempotents in a subset $T$ of some semigroup will be denoted by $E(T)$. 
The letter $M$ will always denote a regular
irreducible algebraic submonoid of $M_n(\fldc)$ and $G$ 
the group of units in
$M$. We denote $M_n(\fldc)$ by $M_n$. 
Green's relations in $M_n$ will be denoted by $\GL$, etc.
and the corresponding relations in $M$  by $\GL^M$, etc. The
Green classes in $M_n$ are denoted by $L_a$, etc. and the
corresponding classes in $M$ by $L_a^M$, etc. 
However if $a\in M_n$ and $\Rank(a)=k$ we sometimes denote $D_a$ by $D_k$.

In this paper we  have considered several different (left) actions of groups on sets.
These actions are denoted by $\Act{j}$,  where $j=1,2,\ldots$, the groups and the sets on which the groups act varying with $j$. Let $\Act{j} : X \times Y \rightarrow Y$ denote the action of a group $X$ on a set $Y$. The orbit of $y\in Y$ under the action  $\Act{j}$ is $\Act{j}(X,y)$.
The stabiliser of $y\in Y$ under $\Act{j}$ will be
denoted by $\Stb{j,y}$.
For a fixed element $y\in Y$, the natural map $x\mapsto \Act{j}(x,y)$
from $X$ into $Y$ will be denoted by $\NAct{j}{y}$. 
The canonical map
$x\Stb{j,y}\mapsto \Act{j}(x,y)$ from $X/\Stb{j,y}$ to $\Act{j}(X,y)$ will be denoted by 
$\Map{j,y}$. 

\section{Preliminaries}

We have the following elementary result describing the Euclidean topology of
algebraic sets.

\begin{prop}\label{Prop.vii.1.1}
Let $X\subseteq \fldc^n$ be an algebraic set. Then $X$ is a Hausdorff, locally
compact, $\sigma$-compact space.
\end{prop}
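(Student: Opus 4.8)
The plan is to reduce all three properties to the single observation that an algebraic set is closed in the euclidean topology. First I would note that every polynomial $f\in\fldc[x_1,\ldots,x_n]$ is a continuous function $\fldc^n\to\fldc$ for the euclidean topology, so each zero set $f^{-1}(0)$ is euclidean-closed; since $X$ is by definition the common zero set of a defining collection of polynomials, $X$ is an intersection of such closed sets and is therefore closed in $\fldc^n$. With closedness in hand, all three assertions follow from standard facts about $\fldc^n\cong\mathbb{R}^{2n}$.

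For the Hausdorff property I would simply observe that $\fldc^n$ is a metric space under the euclidean metric and hence Hausdorff, and that every subspace of a Hausdorff space is Hausdorff. Closedness of $X$ is not even needed here.

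For local compactness the key input is that $\fldc^n\cong\mathbb{R}^{2n}$ is locally compact, together with the fact that a closed subspace of a locally compact Hausdorff space is again locally compact. Concretely, given $x\in X$, a closed euclidean ball $\bar B(x,r)$ is a compact neighbourhood of $x$ in $\fldc^n$ by the Heine--Borel theorem, and its intersection $X\cap\bar B(x,r)$ is a compact neighbourhood of $x$ in the subspace $X$ precisely because $X$ is closed.

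For $\sigma$-compactness I would write $\fldc^n=\bigcup_{m\ge 1}\bar B(0,m)$ as a countable union of closed balls and intersect with $X$, obtaining $X=\bigcup_{m\ge 1}\bigl(X\cap\bar B(0,m)\bigr)$. Each set $X\cap\bar B(0,m)$ is closed (intersection of two closed sets) and bounded in $\fldc^n$, hence compact by Heine--Borel, exhibiting $X$ as a countable union of compact subsets. There is no genuine obstacle in this proposition: the only point deserving a line of justification is the euclidean closedness of $X$, after which everything is a routine appeal to the Heine--Borel theorem and to the hereditary behaviour of the Hausdorff, locally compact, and $\sigma$-compact properties under passage to closed subspaces.
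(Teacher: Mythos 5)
Your proposal is correct and follows essentially the same route as the paper: Hausdorffness is inherited from $\fldc^n$, local compactness follows from the euclidean closedness of $X$ in the locally compact space $\fldc^n$, and $\sigma$-compactness is obtained by intersecting $X$ with a countable family of balls. The only differences are minor: you explicitly justify that $X$ is euclidean-closed via continuity of polynomials (a point the paper merely asserts), and you finish the $\sigma$-compactness step directly with Heine--Borel applied to the closed sets $X\cap \bar B(0,m)$, whereas the paper constructs a nested exhaustion by relatively compact open sets $U_r$ with $\overline{U_r}\subseteq U_{r+1}$ and then cites Theorem XI.7.2 of \cite{Dugu:66} -- your finish is slightly more direct but substantively the same argument.
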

\begin{proof}
Since $\fldc^n$ is Hausdorff, $X$ is
also Hausdorff. Since $\fldc^n$ is locally compact and  since every
algebraic set in $\fldc^n$ is  closed,   $X$ must be  locally
compact.

For  any positive integer $r$  define
$V_r=\{x\in \fldc^n:\|x\|<r\}$ and $U_r=X\cap V_r$. 
Since $\fldc^n=\cup_{r=1}^\infty V_r$, we have $X=\cup_{r=1}^\infty U_r$. We
also have
$
\overline{U_r}=\overline{X\cap V_r}=X\cap \overline{V_r}.
$
Now $\overline{V_r}$ is compact.
Since $X$ is closed in $\fldc^n$, $\overline{U_r}$ 
is also compact in $\fldc^n$. Thus each $U_r$ is
relatively compact set in $X$. Obviously we also have $\overline{U_r}\subseteq
U_{r+1}$. It follows (Theorem XI.7.2 \cite{Dugu:66}) that $X$ is
$\sigma$-compact. 
\end{proof}

Since every algebraic semigroup is an algebraic set, we have the
following corollary to Proposition \ref{Prop.vii.1.1}.

\begin{cor}\label{Cor.vii.1.1}
Every algebraic semigroup is a Hausdorff, locally compact, $\sigma$-compact
space. 
\end{cor} 

A subgroup $A\subseteq \gl$
is said to be algebraic  if $A$ is an algebraic
subset   of \gl.
We have the following  result (Theorem 2.1.2
\cite{Vara:74}) on algebraic groups. 
\begin{thm}\label{Th.vii.1.1}
Let $A$ be an algebraic group in \gl. Then $A$ is a closed analytic subgroup of
\gl. 
\end{thm}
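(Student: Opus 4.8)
The plan is to prove both assertions --- closedness and the analytic subgroup structure --- by reducing the problem to a statement about the smooth locus of the variety $A$ and then propagating that statement by homogeneity.

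First I would establish that $A$ is closed in \gl. Since $A$ is an algebraic subset of \gl, it is the common zero set of a finite family of polynomials in the matrix entries. Each such polynomial is continuous in the euclidean topology, so its zero set is euclidean-closed; hence $A$, being an intersection of such closed sets, is closed in \gl. This is the same mechanism (closedness of algebraic sets) that underlies Proposition \ref{Prop.vii.1.1}.

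Next, to produce the analytic structure, the key point is that $A$ is a \emph{nonsingular} variety. I would argue this by homogeneity: for each $g\in A$, left translation $x\mapsto gx$ restricts to an automorphism of the variety $A$, with inverse given by translation by $g^{-1}$, and these automorphisms act transitively on $A$. The set of smooth points of $A$ is nonempty and is carried to itself by every such automorphism; transitivity then forces every point of $A$ to be smooth. At a smooth point the defining polynomials $f_1,\dots,f_r$ have Jacobian of the expected rank, so the complex implicit function theorem presents $A$, in a neighbourhood of that point, as the graph of a holomorphic map, equipping $A$ with holomorphic charts. Because the differentials $df_i$ are $\fldc$-linear, the tangent space at each point is a complex subspace of the tangent space of \gl, so these charts are mutually compatible and give $A$ the structure of a complex-analytic (in particular real-analytic) closed submanifold of \gl. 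Finally, since multiplication and inversion on \gl\ are rational, hence analytic, maps and $A$ is closed under them, their restrictions to $A$ are analytic; therefore $A$ is a closed analytic subgroup of \gl.

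The main obstacle I anticipate is the nonsingularity step, namely establishing that an algebraic group has no singular points; everything after that is a routine transfer from the algebraic to the analytic category. An alternative route that sidesteps the explicit smoothness argument is to invoke Cartan's closed subgroup theorem, which yields directly that the closed subgroup $A$ of the Lie group \gl\ is an embedded real-analytic Lie subgroup; one would then still upgrade to the complex-analytic structure using the $\fldc$-linearity of the differentials of the holomorphic defining equations.
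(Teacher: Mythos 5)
Your argument is essentially correct, but there is nothing in the paper to compare it against: the paper does not prove this statement at all, quoting it as Theorem 2.1.2 of Varadarajan \cite{Vara:74}. What you have reconstructed is, in substance, the standard textbook proof (and close to Varadarajan's own): closedness from the euclidean continuity of the defining polynomials, nonsingularity by translating the (nonempty) smooth locus around by the transitive action of $A$ on itself, and then the holomorphic implicit function theorem to produce charts and transfer the group operations to the analytic category. Two points deserve tightening. First, in the Jacobian step you must work with generators of the full vanishing ideal $I(A)$ (equivalently, with the reduced structure): an arbitrary finite family of polynomials cutting out $A$ as a set can have Jacobian of deficient rank even at a smooth point, and the nonemptiness of the smooth locus is generic smoothness, which is where characteristic $0$ enters. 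Second, note that under the natural convention an algebraic subset of \gl\ is the intersection with \gl\ of a polynomial zero set (so that \gl\ itself qualifies without being closed in $M_n(\fldc)$); your intersection-of-closed-sets argument then yields exactly the relative closedness in \gl\ that the theorem asserts, not closedness in $M_n(\fldc)$. Your alternative via Cartan's closed subgroup theorem is also legitimate and is arguably the cheaper route for this paper's purposes: it delivers directly a closed embedded real-analytic subgroup, and the real-analytic category is all that the subsequent applications (e.g.\ Lemma \ref{Lemma.vii.2.1} and the orbit theorems from \cite{Dieu:72}) actually use; the complex-analytic refinement your first argument provides is a genuine strengthening but is not needed downstream.
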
 

In general, $G$ is not an algebraic set in $\gl$. However we have
the following result  (see also  Corollary 3.26 \cite{Putc:86}).

\begin{lemma}\label{Lemma.vii.2.1}\ 
\begin{enumerate}
\item
$G=M\cap \gl$.
\item
$G$ is an analytic subgroup of \gl.
\end{enumerate}
\end{lemma}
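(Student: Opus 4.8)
The plan is to establish (1) directly and then bootstrap (2) from it using the homogeneity of $G$. For (1), the inclusion $G \subseteq M \cap \gl$ is immediate: a unit $g \in G$ has a two-sided inverse $h \in M \subseteq M_n$ with $gh = hg = 1_M = I_n$, so $g$ is invertible in $M_n$, i.e. $g \in \gl$. For the reverse inclusion I take $g \in M \cap \gl$ and must show that its inverse $g^{-1}$, a priori only an element of $M_n$, actually lies in $M$. Since $M$ is an algebraic set it is Zariski closed, so it contains the Zariski closure $Z$ of the cyclic subsemigroup $\{g^{k} : k \geq 0\} \subseteq M$. Because $g$ is invertible, $Z$ is a commutative algebraic subgroup of $\gl$ (it coincides with the Zariski closure of the group $\{g^{k} : k \in \mathbb{Z}\}$ of all integer powers, the positive powers already being dense in it); in particular $g^{-1} \in Z \subseteq M$. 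Verifying this density is the step I expect to be the only real obstacle in (1). It then follows that $g^{-1} \in M$, so $g$ is a unit and $g \in G$, giving $G = M \cap \gl$.

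For (2), part (1) exhibits $G = M \cap \gl$ as the intersection of $M$ with the Zariski-open set $\{\det \neq 0\}$, hence as a Zariski-open (indeed dense) subset of the irreducible variety $M$, on which multiplication is a morphism. I emphasise that Theorem \ref{Th.vii.1.1} cannot be applied directly here, since $G$ need not be an algebraic subset of $\gl$; the passage from this fact to analyticity is the main difficulty, and homogeneity is what circumvents it by reducing analyticity at every point of $G$ to nonsingularity at a single point.

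Concretely, for each $a \in G$ the left translation $x \mapsto a x$ is a morphism $M \to M$ with morphic inverse $x \mapsto a^{-1} x$, hence an automorphism of the variety $M$ that sends $1_M$ to $a$ and carries $G$ onto $G$; thus all points of $G$ are alike. Now $M$, as a variety over $\fldc$, has a nonsingular point, and since $G$ is open and dense in the irreducible $M$ it contains one; translating it by the automorphisms above shows that every point of $G$ is a nonsingular point of $M$. At a nonsingular point an affine variety in $M_n = \fldc^{n^{2}}$ agrees, on some Euclidean neighbourhood, with a complex analytic submanifold (via the Jacobian criterion and the implicit function theorem), so $G$ is a complex analytic submanifold of $M_n$ lying inside $\gl$. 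Finally, the holomorphic multiplication and inversion of $\gl$ restrict to analytic maps $G \times G \to G$ and $G \to G$, so $G$ is an analytic subgroup of $\gl$, as claimed.
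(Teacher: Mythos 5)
Your proposal is correct in substance, but it takes a genuinely different route from the paper's on both halves, and part (1) needs one repair. For (1) the paper does not argue at all: it cites Putcha's Remark 3.23, which gives $u\,\GH^M\,1$ whenever $u\in M$ has $\det(u)\neq 0$, and concludes from $G=H_1^M$. Your closure-of-the-cyclic-semigroup argument is essentially the standard proof lying behind that citation, but as stated your intermediate claim is false if the Zariski closure $Z$ of $\{g^k : k\geq 0\}$ is taken in $M_n$: already for $M=M_1(\fldc)$ and $g=2$, that closure is the whole affine line (an infinite subset of $\fldc$ is Zariski dense), which contains $0$ and so is not a subgroup of $\gl[1]$. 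Two one-line repairs are available: either take the closure inside $\gl$ --- legitimate, since $M\cap\gl$ is Zariski closed in $\gl$ --- and invoke the standard fact that a Zariski-closed subsemigroup of an algebraic group is a subgroup, which also discharges the density step you flagged; or argue directly in $M_n$, noting that the descending chain $Z\supseteq gZ\supseteq g^2Z\supseteq\cdots$ of closed sets stabilizes by Noetherianity, so that cancelling powers of the invertible $g$ gives $gZ=Z$, whence $1\in Z$ and $g^{-1}\in Z\subseteq M$ even though $Z$ itself need not be a group. Either way your conclusion $G=M\cap\gl$ stands.

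For (2) the divergence is complete. The paper adjoins the inverse determinant: it embeds $G$ in $\gl[n+1]$ via $a\mapsto\operatorname{diag}(1/\det(a),a)$; the image $G'$ is cut out in $M_{n+1}(\fldc)$ by the equations defining $M$ together with $\alpha\det(a)=1$, hence is an algebraic group to which Theorem \ref{Th.vii.1.1} does apply, and the manifold structure is then transported back to $G$ through the orbit map of the analytic action $(a',u)\mapsto au$ of $G'$ on $\gl$, using Dieudonn\'e's subimmersion--immersion--homeomorphism chain. You instead observe that $G=M\cap\{\det\neq 0\}$ is Zariski-open and dense in the irreducible variety $M$, that the smooth locus of $M$ is a nonempty Zariski-open set, and that left translations by units are variety automorphisms of $M$ acting transitively on $G$, so every point of $G$ is a smooth point of $M$ and hence, by the Jacobian criterion and the holomorphic implicit function theorem, an analytic-manifold point; this delivers exactly what the paper's proof reduces to, namely that $G$ is a submanifold of $\gl$ on which the group operations are analytic. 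Both arguments are sound. Yours is more geometric, self-contained, and avoids the auxiliary group $G'$ and the citations to Varadarajan and Dieudonn\'e; the price is that it leans on irreducibility of $M$ (to ensure $G$ meets the smooth locus), whereas the paper's determinant trick works verbatim for an arbitrary algebraic monoid.
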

\begin{proof}
Clearly we have  $G\subseteq \gl$ and so $G\subseteq M\cap \gl$. Let $u\in
M\cap \gl$. Then $\det(u)\ne 0$. Hence (Remark 3.23 
\cite{Putc:86}) we must have $u\,
\GH^M \, 1$. But $G=H_1^M$ and therefore $u\in G$. This proves the first
half of the lemma.

$G$ is obviously a subgroup of \gl. Hence,  to prove that $G$ is an
analytic subgroup of \gl, we need only show that $G$ is a submanifold of
\gl. To prove this,  we consider the set
\begin{equation}\label{Eq.vii.2.1}
G'=\left\{\left[\begin{smallmatrix} \alpha & 0 \\ 0 &
a\end{smallmatrix}\right]: \alpha\in \fldk,a\in M, \alpha\det(a)=1\right\}.
\end{equation}
Since $M$ is an algebraic set, $G'$ is an algebraic set in \gl[n+1].

Now, for any $a\in G$, we write
$
a'= \left[\begin{smallmatrix} \frac {1}{\det(a)} & 0 \\ 0 &
a\end{smallmatrix}\right].
$
Then 
$
G'=\{ a': a\in G\}.
$
For $a,b\in G$, we have $(ab)'=a'b'\in G'$ 
and so  $G'$ is a subgroup of \gl[n+1]. Thus $G'$ is an algebraic
subgroup of \gl[n+1].  Hence, by Theorem \ref{Th.vii.1.1}, $G'$ is a
closed analytic subgroup of \gl[n+1].

The action of $G'$ on the manifold \gl\ defined by
$
(a',u)\mapsto a'\centerdot u=au.
$
is clearly an analytic left action. Hence the map
$
\varsigma\,:\,G' \rightarrow G\, ,\quad a'\mapsto a'\centerdot 1 =a
$
is a subimmersion. Obviously, the map $\varsigma$ is injective and 
therefore ((16.8.8 (iv))  \cite{Dieu:72}) it is an immersion.
Since the map $a\mapsto \det(a)$ is continuous and open
$\varsigma$ is a homeomorphism. Since
it is also an immersion,   ((16.8.4) \cite{Dieu:72})
$G$ must be a submanifold of \gl.
\end{proof}

The following theorems (see \S 16.10 \cite{Dieu:72})
are repeatedly used in the sequel.
\begin{thm}\label{Thm1}
Let an analytic group $X$ act analytically on a manifold $Y$. 
Then the stabiliser 
$S_y$ of $y\in Y$ is an analytic subgroup of $X$.
\end{thm}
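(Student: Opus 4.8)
The plan is to realise the stabiliser as a fibre of the orbit map and then apply the constant-rank theorem. First I would introduce the orbit map $\theta : X \to Y$ defined by $\theta(x) = x\centerdot y$; this is analytic, being the composite of the analytic action with the analytic inclusion $x\mapsto (x,y)$. Since $S_y = \{x\in X : x\centerdot y = y\} = \theta^{-1}(y)$ is the fibre of $\theta$ over the single point $y$, and since stabilisers of group actions are always subgroups, it only remains to show that this fibre is a submanifold of $X$; the submanifold-plus-subgroup structure then makes $S_y$ an analytic subgroup.

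The crucial step is to prove that $\theta$ has constant rank. Here I would exploit the equivariance of $\theta$ with respect to left translation on $X$ and the action on $Y$. For each $x_0 \in X$ let $\lambda_{x_0}$ denote left translation $x\mapsto x_0 x$ on $X$ and let $\beta_{x_0}$ denote the map $z\mapsto x_0\centerdot z$ of $Y$; both are analytic diffeomorphisms. The associativity of the action gives $\theta(x_0 x) = (x_0 x)\centerdot y = x_0\centerdot(x\centerdot y) = \beta_{x_0}(\theta(x))$, that is, $\theta\circ\lambda_{x_0} = \beta_{x_0}\circ\theta$. Taking tangent maps and applying the chain rule, $T_{x_0 x}\theta\circ T_x\lambda_{x_0} = T_{\theta(x)}\beta_{x_0}\circ T_x\theta$; since $T_x\lambda_{x_0}$ and $T_{\theta(x)}\beta_{x_0}$ are linear isomorphisms, the rank of $\theta$ at $x_0 x$ equals its rank at $x$. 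Taking $x$ to be the identity element of $X$ shows that $\theta$ has the same rank at every point $x_0$. Hence $\theta$ is a subimmersion in the sense of \cite{Dieu:72}.

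Finally, since $\theta$ is a subimmersion, its fibre $\theta^{-1}(y) = S_y$ is a submanifold of $X$ ((16.8.8) \cite{Dieu:72}), and therefore $S_y$ is an analytic subgroup of $X$. I expect the constant-rank verification to be the main obstacle: the group-theoretic and topological parts are immediate, but it is the equivariance argument that reduces the problem to a single application of the rank theorem, precisely along the lines of the subimmersion argument already used in the proof of Lemma \ref{Lemma.vii.2.1}.
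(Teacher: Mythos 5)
Your proof is correct, and it is essentially the argument the paper relies on: the paper offers no proof of this theorem at all, simply citing \S 16.10 of \cite{Dieu:72}, where precisely your equivariance computation $\theta\circ\lambda_{x_0}=\beta_{x_0}\circ\theta$ shows the orbital map $x\mapsto x\centerdot y$ has constant rank --- this is the content of (16.10.2), which the paper itself invokes elsewhere --- after which the stabiliser, as the fibre of a subimmersion, is a closed submanifold and hence an analytic subgroup. Nothing in your write-up needs repair; you have in effect reconstructed the cited source's proof, including the reduction to (16.8.8).
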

\begin{thm}\label{Thm2}
Let $X$ be an analytic group acting analytically on a 
manifold $Y$ under a map $(x,y)\mapsto x\centerdot y$. If $y\in Y$ is such that the orbit $X\centerdot y$ 
is locally closed in $Y$ then $X\centerdot y$ is a submanifold of $Y$ 
and the map $\phi_x:X/S_y \rightarrow X\centerdot y$ defined by 
$xS_y\mapsto x\centerdot y$ is an isomorphism of manifolds.
\end{thm}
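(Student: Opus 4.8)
The plan is to prove this through the orbit--stabiliser correspondence, the only genuine work being the passage from an injective immersion to an embedding, which is precisely where local closedness is used. Throughout, write $\theta\colon X\to X\centerdot y$ for the orbit map $x\mapsto x\centerdot y$, which is analytic as the restriction of the action.

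First I would record that the stabiliser $S_y=\{x\in X:x\centerdot y=y\}$ is closed in $X$, being the preimage of the closed point $\{y\}$ under the continuous map $\theta$; by Theorem~\ref{Thm1} it is moreover an analytic subgroup. A closed analytic subgroup yields a homogeneous space, so that $X/S_y$ carries a manifold structure for which the canonical projection $\pi\colon X\to X/S_y$ is a submersion (see \S16.10 \cite{Dieu:72}). Since $\theta$ is constant on each coset of $S_y$, it factors as $\theta=\phi_x\circ\pi$, and this factorisation forces $\phi_x$ to be a well-defined analytic injection of $X/S_y$ onto the orbit $X\centerdot y$.

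Next I would show that $\phi_x$ is an immersion. The orbit map is equivariant: writing $L_g$ for left translation by $g$ on $X$ and $\lambda_g\colon z\mapsto g\centerdot z$ for the induced diffeomorphism of $Y$, one has $\theta\circ L_g=\lambda_g\circ\theta$, so the differentials of $\theta$ at the various points of a single fibre are conjugate. Hence $\theta$ has constant rank, equal to $\dim X-\dim S_y=\dim(X/S_y)$. Comparing $d\theta=d\phi_x\circ d\pi$ with the submersion $\pi$ of the same rank then shows that $d\phi_x$ is injective everywhere, so $\phi_x$ is an injective immersion, and in particular a continuous bijection of $X/S_y$ onto $X\centerdot y$.

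The main obstacle is that an injective immersion need not be an embedding, and overcoming it is exactly where the hypothesis that $X\centerdot y$ is locally closed enters, by way of a Baire--category argument. Being locally closed in the manifold $Y$, the orbit is locally compact and Hausdorff, hence a Baire space. Given a prescribed neighbourhood $U$ of the identity, I would fix a symmetric, relatively compact neighbourhood $V$ of the identity with $\overline V\,\overline V\subseteq U$, and use that the Lie group $X$ is $\sigma$-compact to cover it by countably many translates $x_nV$. Then $X\centerdot y=\bigcup_n x_n\centerdot(\overline V\centerdot y)$ is a countable union of compact, hence closed, subsets of the orbit, so by the Baire property one of them has nonempty interior in $X\centerdot y$; since each $x_n$ acts as a homeomorphism of the orbit, $\overline V\centerdot y$ itself has nonempty interior. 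Translating an interior point $v_0\centerdot y$ back to $y$ by the homeomorphism $\lambda_{v_0^{-1}}$ and using $v_0^{-1}\overline V\subseteq\overline V\,\overline V\subseteq U$ shows that $U\centerdot y$ is a neighbourhood of $y$ in the orbit; by homogeneity $\theta$ is therefore an open map, whence $\phi_x$ is open and so a homeomorphism onto $X\centerdot y$. A homeomorphism that is simultaneously an immersion is an embedding, so $X\centerdot y$ is a submanifold of $Y$ and $\phi_x$ is an isomorphism of manifolds, which completes the argument.
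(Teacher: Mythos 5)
Your proof is correct, but note that the paper itself offers no proof of this statement: it is quoted as a known result from \S 16.10 of \cite{Dieu:72} (see (16.10.2)--(16.10.7) there), so there is nothing internal to compare against. Your argument---stabiliser closed and analytic, homogeneous manifold $X/S_y$ with $\pi$ a submersion, constant rank of the orbit map by equivariance giving an injective immersion $\phi$, and then the Baire-category argument on the locally compact (because locally closed) orbit to upgrade $\phi$ to a homeomorphism and hence an embedding---is precisely the standard proof found in the cited source and in, e.g., Varadarajan \cite{Vara:74}, and it correctly isolates local closedness as the ingredient that makes the injective immersion an embedding. The only terse step is the identification of the constant rank with $\dim X-\dim S_y$: this deserves the one-line justification that, by the rank theorem, the fibre $\theta^{-1}(y)=S_y$ is locally a submanifold of dimension $\dim X - \operatorname{rank}(\theta)$; also, your appeal to $\sigma$-compactness of $X$ is legitimate here since the groups to which the theorem is applied in this paper are analytic subgroups of $\mathsf{GL}(n)$, hence second countable.
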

\section{Topology of  Green Classes}

We have the following characterizations of the Green
classes in $M$. These follow from Proposition 6.1 \cite{Putc:86},
Theorem 1.4 \cite{Putc:86} and Proposition II.4.5 \cite{Howi:76}.

\begin{lemma}\label{Lemma.vii.3.1}
Let $a\in M$. Then we have:
\begin{enumerate}
\item
$L^M_a=Ga=L_a\cap M$.
\item
$R^M_a=aG=R_a\cap M$.
\item
$D^M_a=GaG=J^M_a$.
\end{enumerate}
\end{lemma}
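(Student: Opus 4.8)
The plan is to treat each equality as a pair of inclusions, separating those that hold formally because $G$ is the unit group from those that genuinely use the regularity of $M$. Begin with part~(1). If $g\in G$ then $Mga=Ma$, since $Mga\subseteq Ma$ while $Ma=Mg^{-1}(ga)\subseteq Mga$; hence $ga\in L^M_a$ and so $Ga\subseteq L^M_a$. The inclusion $L^M_a\subseteq L_a\cap M$ is equally immediate, because $b\in L^M_a$ gives $Mb=Ma$, hence $M_nb=M_na$, so $b\in L_a$, and trivially $b\in M$. What remains are the two substantive reverse inclusions $L_a\cap M\subseteq L^M_a$ and $L^M_a\subseteq Ga$.

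For the first of these I would appeal to Proposition~II.4.5 of \cite{Howi:76}. Both $M_n$ and $M$ are regular semigroups ($M_n$ because every matrix has a von Neumann inverse, $M$ by hypothesis), and for a regular subsemigroup the relation $\mathscr L$ is exactly the restriction of the ambient one. Concretely, given $b\in L_a$ with $a,b\in M$, choose inverses $a',b'\in M$ of $a,b$; writing $b\GL a$ in $M_n$ as $b=sa$ and $a=tb$, one checks that $b=(ba')a$ and $a=(ab')b$ with $ba',ab'\in M$, whence $Ma=Mb$ and thus $b\in L^M_a$. This yields $L_a\cap M\subseteq L^M_a$.

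The genuinely hard inclusion, and where I expect the main obstacle to lie, is $L^M_a\subseteq Ga$. In an arbitrary regular monoid an $\mathscr L$-class is a union of several left $G$-orbits, and collapsing it to the single orbit $Ga$ is possible precisely when $M$ is \emph{unit regular}, i.e.\ when every $a$ satisfies $aga=a$ for some unit $g\in G$. I would take the description $L^M_a=Ga$ (and, symmetrically, $R^M_a=aG$) from Proposition~6.1 of \cite{Putc:86}, noting that the key enabling fact behind it is exactly the unit regularity of $M$; this is where the hypotheses of regularity and irreducibility are spent, and a self-contained proof of this reduction, rather than the two inclusions already handled, is what I anticipate to be delicate. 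Part~(2) then follows by the mirror-image argument with left ideals and left orbits replaced by right ones throughout.

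Finally, for part~(3) I would first deduce $D^M_a=GaG$ from parts~(1) and~(2) alone: the $\mathscr D$-class of $a$ is the union of the $\mathscr L$-classes of all elements $\mathscr R$-related to $a$, that is, the union of the classes $L^M_b=Gb$ as $b$ runs through $R^M_a=aG$, which is $\bigcup_{g\in G}Gag=GaG$. It then remains to prove $GaG=J^M_a$, equivalently $\mathscr D^M=\mathscr J^M$; only $J^M_a\subseteq D^M_a$ needs argument, since $\mathscr D\subseteq\mathscr J$ always. This is the second place I expect difficulty, as the inclusion can fail in a general semigroup; here it holds because $M$, being an algebraic monoid, is strongly $\pi$-regular, and I would invoke Theorem~1.4 of \cite{Putc:86} for the equality $\mathscr D^M=\mathscr J^M$. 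Combining this with $D^M_a=GaG$ completes the proof.
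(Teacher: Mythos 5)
Your proposal is correct and takes essentially the same route as the paper: the paper offers no independent argument, stating only that the lemma ``follows from Proposition 6.1 \cite{Putc:86}, Theorem 1.4 \cite{Putc:86} and Proposition II.4.5 \cite{Howi:76}'' --- precisely the three results you invoke for the substantive inclusions $L^M_a\subseteq Ga$ (via unit regularity), $L_a\cap M\subseteq L^M_a$ (regular subsemigroup), and $\mathscr{D}^M=\mathscr{J}^M$ (strong $\pi$-regularity). Your explicit verification of the easy inclusions and the derivation of $D^M_a=GaG$ from parts (1) and (2) merely fill in details the paper leaves to the cited sources.
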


The Green classes in $M_n$ are submanifolds of $M_n$. We now prove that the
result is true for $M$ also.

\begin{prop}\label{Prop.vii.3.1}
Let $a\in M$. Then $L_a^M$ and $R_a^M$ are submanifolds of $M_n$.
\end{prop}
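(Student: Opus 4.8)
The plan is to exhibit $L_a^M$ and $R_a^M$ as orbits of analytic group actions and then invoke Theorem \ref{Thm2}. By Lemma \ref{Lemma.vii.3.1} we have $L_a^M = Ga$ and $R_a^M = aG$. Now $M_n \cong \fldc^{n^2}$ is an analytic manifold, and $G$ is an analytic group by Lemma \ref{Lemma.vii.2.1}. Consider the left action of $G$ on $M_n$ given by $(g,x)\mapsto gx$; since matrix multiplication is a polynomial, hence analytic, map, this is an analytic action, and the orbit of $a$ is precisely $Ga = L_a^M$. Likewise, the map $(g,x)\mapsto xg^{-1}$ defines an analytic left action of $G$ on $M_n$ (inversion being analytic on the analytic group $G$), whose orbit through $a$ is $aG = R_a^M$.

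To apply Theorem \ref{Thm2} it remains only to check that these orbits are locally closed in $M_n$, and for this I would switch to the alternative descriptions $L_a^M = L_a\cap M$ and $R_a^M = R_a\cap M$ furnished by the same Lemma \ref{Lemma.vii.3.1}. The Green class $L_a$ is a submanifold of $M_n$, hence locally closed, so we may write $L_a = U\cap C$ with $U$ open and $C$ closed in $M_n$. Since $M$ is an algebraic set it is closed in $M_n$, whence $L_a^M = L_a\cap M = U\cap(C\cap M)$ is the intersection of an open set with a closed set, that is, locally closed. The identical reasoning, applied to the submanifold $R_a$, shows that $R_a^M = R_a\cap M$ is locally closed.

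With local closedness in hand, Theorem \ref{Thm2} delivers at once that the orbits $Ga = L_a^M$ and $aG = R_a^M$ are submanifolds of $M_n$, which is the assertion. The crux of the argument is the local-closedness verification; the remaining steps are routine checks that the two group actions are analytic. The one point requiring care is that the two descriptions of each Green class supplied by Lemma \ref{Lemma.vii.3.1} must be used in tandem — the orbit description $Ga$ (resp.\ $aG$) to set up the action, and the intersection description $L_a\cap M$ (resp.\ $R_a\cap M$) together with the closedness of the algebraic set $M$ to secure that the orbit is locally closed, as demanded by the hypothesis of Theorem \ref{Thm2}.
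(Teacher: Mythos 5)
Your proposal is correct and follows essentially the same route as the paper's own proof: both exhibit $L_a^M=Ga$ and $R_a^M=aG$ as orbits of the analytic actions $(g,x)\mapsto gx$ and $(g,x)\mapsto xg^{-1}$ of $G$ on $M_n$, verify local closedness by writing $L_a^M=L_a\cap M$ (resp.\ $R_a^M=R_a\cap M$) as the intersection of the locally closed submanifold $L_a$ (resp.\ $R_a$) with the closed algebraic set $M$, and conclude via Theorem \ref{Thm2}. The only cosmetic difference is that the paper secures analyticity of the action by restricting the $\gl$-action to the submanifold $G\times M_n$, whereas you invoke the polynomiality of matrix multiplication directly; these amount to the same justification.
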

\begin{proof}
The map $\Act1' :\gl\times M_n \lro M_n$ defined by 
$(v,x) \mapsto  vx$
is an analytic left action of $\gl$ on $M_n$.
Since $G$ is a submanifold of $\gl$  (see  Lemma \ref{Lemma.vii.2.1}),
$G\times M_n$ is a submanifold of
$\gl\times M_n$ and so the map
$$
\Act1 : G\times M_n \lro M_n, 
\quad 
(v,x) \mapsto  vx,
$$ 
which is the restriction of $\Act1'$ to $G$, is also analytic
((16.8.3.4) \cite{Dieu:72}).

Now let $a\in M$. Lemma \ref{Lemma.vii.3.1} implies that $\Act1(G,a)$, the orbit of
$a$ under $\Act1$, is $L_a^M=L_a\cap M$.
It is known that $L_a$ is a submanifold of $M_n$ (Proposition 2.2 \cite{kris:00}).
Therefore, $L_a$ is a locally compact subspace of $M_n$. Since
$M_n$ is itself locally compact, we can find an open set $U$ and
a closed set $F$ in $M_n$ such that $L_a=U\cap F$. Therefore,
$L_a^M=U\cap(F\cap M)$. Since $M$ is an algebraic set in $M_n$,
it is a closed set in $M_n$. Thus, $L_a^M$ is the intersection of
an open set and a  closed set in $M_n$. It follows that $L_a^M$
is locally closed in $M_n$ ((12.2.3) \cite{Dieu:70}).
Now,  it follows that (see \S 16.10 \cite{Dieu:72}) that $L_a^M$ is a submanifold
of $M_n$.

To prove the result concerning $R^M_a$ we consider the map 
$$
\Act2 : G\times M_n \lro M_n, 
\quad  
(w,x) \mapsto  xw^{-1}.
$$
\end{proof}
Since $L_a$ and $L_a^M$ are submanifolds of $M_n$, and since
$L_a^M\subseteq L_a$ (and similarly for $R_a^M$) 
we have ((16.8.6.1) \cite{Dieu:72}):
\begin{cor}
$L_a^M$ is a submanifold of $L_a$ and $R_a^M$ is a submanifold of $R_a$.
\end{cor}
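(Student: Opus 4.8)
The plan is to recognise that this is a transitivity statement about the submanifold relation: both $L_a^M$ and $L_a$ are submanifolds of $M_n$ with $L_a^M \subseteq L_a$, and I want to conclude that $L_a^M$ is a submanifold of the intermediate manifold $L_a$. The whole assertion is encapsulated in (16.8.6.1) \cite{Dieu:72}, so the real work is in verifying that its hypotheses hold.

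First I would record the facts already available: by Proposition \ref{Prop.vii.3.1} (together with Proposition 2.2 \cite{kris:00}) the set $L_a$ is a submanifold of $M_n$ and so is $L_a^M$, while Lemma \ref{Lemma.vii.3.1} gives $L_a^M = L_a\cap M \subseteq L_a$. Writing $i : L_a \lro M_n$, $j : L_a^M \lro M_n$ and $\iota : L_a^M \lro L_a$ for the three inclusions, I have $j = i\circ\iota$, and the goal reduces to showing that $\iota$ is an analytic embedding of $L_a^M$ into $L_a$.

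The key steps, in order, are: (i) \emph{analyticity of $\iota$} --- since $L_a$ is a submanifold of $M_n$ and the composite $i\circ\iota = j$ is analytic, the map $\iota$ into the submanifold $L_a$ is itself analytic by the universal property of submanifolds in \cite{Dieu:72}; (ii) \emph{the immersion property} --- from $Tj = Ti\circ T\iota$ with $Tj$ injective (as $j$ is an immersion), the tangent map $T\iota$ is injective at every point, so $\iota$ is an immersion; and (iii) \emph{the topological condition} --- because $L_a$ already carries the subspace topology from $M_n$, the subspace topology that $L_a^M$ inherits from $L_a$ coincides with the one it inherits from $M_n$, so $\iota$ is a homeomorphism onto its image. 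Together (i)--(iii) say that $\iota$ is an analytic embedding, which is precisely the conclusion of (16.8.6.1).

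For the second assertion I would argue identically, replacing the action $\Act1$ by the action $\Act2$ from the proof of Proposition \ref{Prop.vii.3.1}: $R_a^M$ and $R_a$ are both submanifolds of $M_n$ with $R_a^M \subseteq R_a$, and the same three-step verification applies verbatim. The only point needing genuine care is step (ii): from $Tj = Ti\circ T\iota$ with $Tj$ injective one must conclude that $T\iota$ is injective, using that a composite of linear maps is injective only if its inner factor is. This, together with the availability of $L_a$ as an analytic manifold target in step (i), is exactly where the hypothesis that \emph{both} sets are submanifolds of the common ambient $M_n$ is used.
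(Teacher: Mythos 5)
Your proposal is correct and follows the same route as the paper: the paper derives the corollary directly from (16.8.6.1) of \cite{Dieu:72}, given that $L_a^M\subseteq L_a$ and $R_a^M\subseteq R_a$ are each submanifolds of the common ambient $M_n$ (Proposition \ref{Prop.vii.3.1}, Proposition 2.2 of \cite{kris:00}, and Lemma \ref{Lemma.vii.3.1}). Your three-step verification (analyticity, immersion, subspace topology) merely unpacks the proof of (16.8.6.1), which the paper cites without repeating, so there is no substantive difference.
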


Since $L_a^M$ and $R_a^M$ are locally closed subspaces of $M_n$ we also have (see Theorem \ref{Thm2}):
\begin{cor}\label{Cor.vii.1}
For $i=1,2$ we have:
\begin{enumerate}
\item
$\Stb{i,a}$ is an analytic subgroup of $G$.
\item
$\Map{i,a}$ is an isomorphism of manifolds.
\item
$\NAct{i}{a}$  is a continuous open surjection.
\end{enumerate}
\end{cor}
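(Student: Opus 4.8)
The plan is to obtain all three assertions directly from Theorems \ref{Thm1} and \ref{Thm2}, relying on the groundwork already laid in the proof of Proposition \ref{Prop.vii.3.1}. By Lemma \ref{Lemma.vii.2.1}, $G$ is an analytic group, and for $i=1,2$ the map $\Act{i}$ is an analytic left action of $G$ on the manifold $M_n$; its orbit through $a$ is $\Act1(G,a)=L_a^M$ when $i=1$ and $\Act2(G,a)=R_a^M$ when $i=2$, and both of these orbits were shown in Proposition \ref{Prop.vii.3.1} to be locally closed in $M_n$. (The assertions for $\Act2$ are entirely analogous to those established there for $\Act1$.) With these hypotheses in hand, the two cited theorems apply immediately.

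For part (1), I would simply invoke Theorem \ref{Thm1}: since $\Act{i}$ is an analytic action of the analytic group $G$ on the manifold $M_n$, the stabiliser $\Stb{i,a}$ of $a$ is an analytic subgroup of $G$. For part (2), since the orbit $\Act{i}(G,a)$ is locally closed in $M_n$, Theorem \ref{Thm2} applies and yields both that this orbit is a submanifold of $M_n$ and that the canonical map $\Map{i,a}:G/\Stb{i,a}\lro \Act{i}(G,a)$ is an isomorphism of manifolds. For part (3), I would factor $\NAct{i}{a}$ through the coset space, writing it as the composite of the canonical projection $\pi_i:G\lro G/\Stb{i,a}$ with the map $\Map{i,a}$, so that $\NAct{i}{a}=\Map{i,a}\circ\pi_i$. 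The projection $\pi_i$ is a continuous open surjection, this being a standard property of the quotient of a topological group by a subgroup, and $\Map{i,a}$ is a homeomorphism by part (2); hence their composite $\NAct{i}{a}$ is a continuous open surjection onto the orbit.

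I do not anticipate a genuine obstacle, since the substantive work is carried out by Theorems \ref{Thm1} and \ref{Thm2} and has already been front-loaded into Proposition \ref{Prop.vii.3.1}. The only points that require care are the verifications — that each $\Act{i}$ is an analytic action and that each orbit is locally closed in $M_n$ — which are exactly the facts supplied by that proposition, together with the elementary observation that the coset projection $\pi_i$ is open, needed for the openness claim in part (3).
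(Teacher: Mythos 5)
Your proposal is correct and follows essentially the same route as the paper, which states the corollary as an immediate consequence of Theorems \ref{Thm1} and \ref{Thm2} once Proposition \ref{Prop.vii.3.1} has established that $\Act1$, $\Act2$ are analytic actions of $G$ on $M_n$ with locally closed orbits $L_a^M$, $R_a^M$. Your explicit factorisation $\NAct{i}{a}=\Map{i,a}\circ\pi_i$ for part (3) merely spells out the detail the paper leaves implicit, and it is the intended argument.
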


We next consider the manifold structure of $\GD$-classes of  $M$. 
\begin{lemma}\label{Lemma.vii.3.3}
Let $a\in M$. Then
$D^M_a$ is a locally closed  subspace of $M_n$.
\end{lemma}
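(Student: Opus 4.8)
The plan is to realize $D_a^M$ as a single orbit and then show it is open in its (Euclidean) closure, which is one of the standard descriptions of a locally closed set ((12.2.3) \cite{Dieu:70}). By Lemma \ref{Lemma.vii.3.1} we have $D_a^M = GaG$, and this is precisely the orbit of $a$ under the two-sided action $(g,h)\centerdot x = gxh^{-1}$ of $G\times G$ on $M_n$, the componentwise combination of the actions $\Act1$ and $\Act2$ already used in Proposition \ref{Prop.vii.3.1}. Writing $k=\Rank(a)$, every element of $GaG$ has rank $k$, since left and right multiplication by units preserves rank; hence $D_a^M \subseteq \{x\in M_n : \Rank(x)=k\}$. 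As $\{x : \Rank(x)\le k\}$ is a determinantal (algebraic, hence closed) subset of $M_n$, the Euclidean closure $\overline{D_a^M}$ is contained in $\{x\in M : \Rank(x)\le k\}$.

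First I would reduce local closedness to the single equality $\overline{D_a^M}\cap\{x:\Rank(x)=k\} = D_a^M$. Granting this, $D_a^M$ is the intersection of the closed set $\overline{D_a^M}$ with the locally closed ``rank exactly $k$'' set, and is therefore locally closed in $M_n$; equivalently $\overline{D_a^M}\setminus D_a^M$ lies entirely in the strictly smaller rank locus, which is closed. The content of the equality is that a limit of elements $g_n a h_n^{-1}$ which happens to retain full rank $k$ must itself lie in $GaG$ --- in semigroup language, that no $\GD^M$-class of the same rank sits in the frontier of $D_a^M$.

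To establish this I would pass to the algebraic category. The group $G=M\cap\gl$ is the unit group of the affine algebraic monoid $M$ and is therefore an affine algebraic group, Zariski-open in $M$ (it is the non-vanishing locus of the determinant on $M$); concretely it is isomorphic, via the embedding $a\mapsto a'$ of Lemma \ref{Lemma.vii.2.1}, to the Zariski-closed subgroup $G'\subseteq\gl[n+1]$. The two-sided multiplication map $G\times G\to M$ is a morphism of varieties, so by the orbit theorem for algebraic group actions (orbits are locally closed in the Zariski topology, open in their Zariski closures, with frontiers that are unions of strictly lower dimensional orbits; see \cite{Putc:86}) the orbit $GaG=D_a^M$ is Zariski locally closed in $M$. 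Since the Euclidean topology is finer than the Zariski topology, a Zariski-open set is Euclidean-open and a Zariski-closed set is Euclidean-closed; hence a Zariski locally closed set is Euclidean locally closed. Finally, as $M$ is algebraic and thus Euclidean-closed in $M_n$, a subset locally closed in $M$ is locally closed in $M_n$, which gives the claim.

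The main obstacle, and the reason this needs separate treatment from Proposition \ref{Prop.vii.3.1}, is that the convenient identity $L_a^M=L_a\cap M$ has no analogue for $\GD$: in general $D_a^M\subsetneq D_a\cap M$ (for instance in the monoid of $2\times2$ diagonal matrices the rank-one locus splits into two distinct $\GD^M$-classes), so one cannot simply intersect a known locally closed set in $M_n$ with $M$. The real work is therefore the frontier equality above, and I expect the cleanest route to it to be the Zariski orbit theorem together with the transfer to the Euclidean topology; the two points needing care are that $G$ must be treated as an algebraic group in its own right (it is not Zariski-closed in $\gl$, which is why the embedding $G\cong G'$ of Lemma \ref{Lemma.vii.2.1} is convenient) and that ``Zariski locally closed implies Euclidean locally closed'' rests only on the comparison of the two topologies.
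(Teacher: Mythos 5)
Your proof is correct, and at bottom it runs on the same two pillars as the paper's: Zariski local closedness of the orbit $GaG$, plus the transfer principle that Zariski-open sets are Euclidean-open and Zariski-closed sets are Euclidean-closed, so that Zariski locally closed implies Euclidean locally closed ((12.2.3) \cite{Dieu:70}). The difference is how the Zariski statement is obtained. You invoke the general orbit theorem for algebraic group actions in one stroke; the paper instead reconstructs exactly the fragment it needs by hand. It takes $F=\overline{MaM}$ (Zariski closure), shows $uF=Fu=F$ for all $u\in G$, imports from the \emph{proof} of Proposition 6.1 in \cite{Putc:86} only the existence of one Zariski-open subset $U_0$ of $F$ with $U_0\subseteq GaG$, writes $U_0=F\cap X_0$ with $X_0$ Zariski-open in $M_n$, and then translates: for each $b=uav\in GaG$ it verifies $u'U_0v'=F\cap (u'X_0v')$ with $u'=uu_0^{-1}$, $v'=v_0^{-1}v$, whence $GaG=F\cap\bigl[\bigcup_{b\in GaG}(u'X_0v')\bigr]$ is the intersection of a Euclidean-closed and a Euclidean-open set. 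Your route is shorter and yields more (smoothness of the orbit, the frontier structure); its one weakness is the reference: \cite{Putc:86} contains the open-orbit fact only inside that proof, not as a quotable stand-alone orbit theorem, so strictly you should cite a standard algebraic-groups source (Borel, Humphreys) or carry out a translation argument as the paper does. Your diagnosis of why the method of Proposition \ref{Prop.vii.3.1} does not extend ($D_a^M\subsetneq D_a\cap M$ in general, with the diagonal $2\times 2$ monoid --- which is indeed regular and irreducible --- as witness) matches the actual obstruction, and your final step (locally closed in the Euclidean-closed set $M$ implies locally closed in $M_n$) is sound.

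One loose end to cut: the ``reduction'' in your first paragraph is never used, and its key equality is not actually delivered by what you cite. The orbit theorem makes the frontier of $GaG$ a union of orbits of strictly smaller \emph{dimension}, not of strictly smaller matrix \emph{rank}, so the equality $\overline{D_a^M}\cap\{x:\Rank(x)=k\}=D_a^M$ does not follow from it --- a lower-dimensional $\GD^M$-class in the frontier could a priori still consist of rank-$k$ matrices. Since your second paragraph derives local closedness directly from Zariski local closedness, nothing in the argument depends on that equality; delete the detour rather than leave an unproved claim standing.
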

\begin{proof}
By Lemma \ref{Lemma.vii.3.1}, we have $D^M_a=GaG$.
Let $F=\overline{MaM}$ be the Zarisky-closure of $MaM$ in $M_n$.
For any $u\in G$, the map $y\mapsto uy$ is a Zarisky-homeomorphism 
 of $M_n$ onto itself. Therefore,
$
uF=u\overline{MaM}=\overline{uMaM}.
$
But, clearly, we have $uM=M$ so that $uMaM=MaM$. Hence,
$uF=\overline{MaM}=F$. Similarly, we also have $Fu=F$.

Now, there exists a Zarisky-open subset $U_0$  of $F$ such that
$U_0\subseteq GaG$
(see proof of Proposition 6.1 in \cite{Putc:86}).
Since $U_0$ is a Zarisky open subset of $F$, we can find a Zarisky-open
set $X_0$ in $M_n(\fldk)$ such that
$
U_0=F\cap X_0.
$

Choose some fixed element $u_0av_0\in U_0$. Consider any element  $b=uav$ in
$GaG$. We write
$ U_b=u'U_0v'$ where $ u'=uu_0^{-1}$ , $ v'=v_0^{-1}v$.
We show that $U_b= F\cap (u'X_0v')$.

Let $x\in U_0$ then $x\in F$ and $x\in X_0$.
Since $u'F=F$, we have $u'x\in F$.  Now $u'x\in u'X_0$. Therefore
$u'U_0\subseteq F \cap (u'X_0)$.  To prove the inclusion in the
reverse direction, let $x\in F\cap (u'X_0)$. Let $x=u'x_0$ for some
$x_0\in X_0$. Then
$
x_0=(u')^{-1}x \in (u')^{-1} F =  F.
$
This shows that $x=u'x_0\in u'(F\cap X_0)$ implying that
$F\cap (u'X_0)\subseteq u'U_0$. Hence $F\cap
(u'X_0)=u'U_0$. By a similar argument we can now show that $F\cap
(X_0v')=U_0v'$. 
Now   we  have
\begin{align*}
U_b     & = u'U_0v' \\
        & = (u'U_0)\cap (U_0v')\\
        & = (F\cap u'X_0)\cap(F\cap X_0v')\\
	& = F\cap (u'X_0\cap X_0v')\\
	& = F\cap (u'X_0v'). 
\end{align*}

Since $b\in U_b=F\cap u'X_0v' \subseteq GaG $  we have
$$
GaG=F\cap \left[ \cup_{b\in GaG} (u'X_0v')\right].
$$
Since $F$ is Zarisky-closed, $F$ is a closed set in $M_n$.
Since $X_0$ is Zarisky-open it is  open in $M_n$. Hence $u'X_0v'$ is
also open. Therefore 
$\cup_{b\in GaG} (u'X_0v')$ is open in $M_n$. This shows that
$GaG$ is the intersection of a closed and an open set in $M_n$.
Therefore $GaG$ is a locally closed set in $M_n$
((12.2.3) \cite{Dieu:70}).
\end{proof}
\begin{prop}\label{Prop.vii.3.2}
Let $a\in M$. Then $D^M_a$ is a  submanifold of $M_n$.
\end{prop}
\begin{proof}
Consider the map
$$
\Act3 :(G\times G)\times M_n \lro M_n,
\quad
((v,w),x)\mapsto vxw^{-1}.
$$
This defines an analytic left action of $G\times G$ on $M_n$.
By Lemma \ref{Lemma.vii.3.1}, $\Act3((G\times G), a)=GaG=D^M_a$.
Since  $GaG$ is a locally closed subspace of
$M_n$ (Lemma \ref{Lemma.vii.3.3}),
$D^M_a$ is a submanifold of $M_n$ (Theorem \ref{Thm2}).
\end{proof} 

Since $D_a$ is a submanifold of $M_n$, and since $D_a^M\subseteq
D_a$ we have:

\begin{cor}
$D_a^M$ is a submanifold of $D_a$.
\end{cor}

Theorem \ref{Thm2}  implies the following:
\begin{cor}\label{Cor.vii.Map3}         
$\Stb{3,a}$ is an analytic subgroup of
$G\times G$ and $\Map{3,a}$
is an isomorphism of manifolds.
\end{cor}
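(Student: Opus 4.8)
The plan is to obtain both assertions as direct consequences of Theorems \ref{Thm1} and \ref{Thm2} applied to the action $\Act3$, exactly as the preceding corollaries for the $\GL$- and $\GR$-classes were read off from the one-sided actions $\Act1$ and $\Act2$ in Corollary \ref{Cor.vii.1}. The substantive hypotheses have already been assembled while proving Proposition \ref{Prop.vii.3.2}, so the argument here is essentially a matter of matching the abstract data of the two structure theorems to the concrete objects at hand.

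First I would check that $G\times G$ is an analytic group acting analytically on $M_n$. By Lemma \ref{Lemma.vii.2.1}, $G$ is an analytic subgroup of \gl, so $G\times G$ is an analytic group; and the analyticity of $\Act3$ was recorded when the proposition was set up. Taking $X=G\times G$, $Y=M_n$, and $y=a$, Theorem \ref{Thm1} then yields at once that the stabiliser $\Stb{3,a}$ is an analytic subgroup of $G\times G$, which is the first assertion.

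For the second assertion I would invoke Theorem \ref{Thm2} with the same data. The one nontrivial hypothesis of that theorem is that the orbit be locally closed in $Y$. Here the orbit is precisely $\Act3((G\times G),a)=GaG=D^M_a$ by Lemma \ref{Lemma.vii.3.1}, and its local closedness in $M_n$ is exactly the content of Lemma \ref{Lemma.vii.3.3}. Theorem \ref{Thm2} then gives that $\Map{3,a}:(G\times G)/\Stb{3,a}\to D^M_a$, sending $(v,w)\Stb{3,a}\mapsto vaw^{-1}$, is an isomorphism of manifolds, completing the proof.

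I expect no real obstacle remaining at this point: all the difficulty has been front-loaded into Lemma \ref{Lemma.vii.3.3}, where $GaG$ was shown to be locally closed by the Zariski covering argument using the set $U_0$ and its translates. Once that fact is in hand, the corollary is a formal application of the structure theorems of \S16.10 of \cite{Dieu:72}, and the only point needing a moment's care is the bookkeeping identification of the abstract orbit, stabiliser, and canonical quotient map with the concrete triple $D^M_a$, $\Stb{3,a}$, and $\Map{3,a}$.
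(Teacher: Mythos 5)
Your proposal matches the paper exactly: the paper derives this corollary as an immediate consequence of Theorems \ref{Thm1} and \ref{Thm2} applied to the analytic action $\Act3$ of $G\times G$ on $M_n$, with the local closedness of the orbit $D^M_a=GaG$ supplied by Lemma \ref{Lemma.vii.3.3} (already used in Proposition \ref{Prop.vii.3.2}). Your identification of the orbit, stabiliser, and canonical map with $D^M_a$, $\Stb{3,a}$, and $\Map{3,a}$ is precisely the intended reading, so there is nothing to add.
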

%
%
\section{Green Classes  and Grassmann Manifolds}
Let $V$ be a fixed $n$-dimensional vector space over $\fldc$ and let 
$\gk$ be the set of all $k$-dimensional subspaces of $V$. Chosing some
fixed ordered basis $B$ we can represent elements of $V$ by 
coordinate vectors relative to $B$. Also elements of $M_n$
can be looked upon as matrix representations of liear endomorphisms
of $V$ relative to $B$. If $M(k,n;k)$ denotes the space of 
$k \times n$ matrices of rank $k$, then the topology of
$\gk$ is the quotient topology induced by the map
$q:M(k,n;k)\rightarrow \gk$ which maps $P\in M(k,n;k)$
to the subpace of $V$ generted by the rows of $P$. This topology makes
$\gk$ a manifold called the Grassmann Manifold.
If $x\in M\subseteq M_n$ and $\Rank(x)=k$ then the range $\rng[x]$ is in $\gk$
and the null space $\nul[x]$ is in $\gk[n-k]$. 

\begin{lemma}\label{Lemma.vii.4.1}
Let $a,b\in M$. Then
\begin{enumerate}
\item
$a\,\GL^M\, b\,\Leftrightarrow \, \rng[a]=\rng[b].$
\item
$a\,\GR^M\, b\,\Leftrightarrow \, \nul[a]=\nul[b].$
\item
$\GD^M=\GJ^M$.
\end{enumerate}
\end{lemma}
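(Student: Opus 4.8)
The plan is to deduce all three parts from Lemma~\ref{Lemma.vii.3.1}, which expresses each Green class of $M$ as the trace on $M$ of the corresponding class in $M_n$, combined with the classical description of Green's relations in the full matrix monoid $M_n$. Recall that in $M_n$ one has $M_n a=\{x:\rng[x]\subseteq\rng[a]\}$ and $aM_n=\{x:\nul[a]\subseteq\nul[x]\}$, whence $a\GL b \iff \rng[a]=\rng[b]$ and $a\GR b \iff \nul[a]=\nul[b]$; here $\rng[a]$ is the row space (image) of $a$ and $\nul[a]$ is the orthogonal complement of the column space, so that equality of null spaces is the same as equality of column spaces. I would simply cite this $M_n$-level fact (it is the linear analogue of the transformation-semigroup result and is standard; cf. \cite{Howi:76}, \cite{kris:00}) or else record it in a line directly from the two displayed descriptions of the principal one-sided ideals.

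For part (1), Lemma~\ref{Lemma.vii.3.1}(1) gives $L_a^M=L_a\cap M$. Hence, for $a,b\in M$,
$$
a\GL^M b \iff b\in L_a\cap M \iff b\in L_a \iff a\GL b,
$$
the middle equivalence holding because $b\in M$ already. Composing with $a\GL b \iff \rng[a]=\rng[b]$ yields~(1). Part~(2) is entirely parallel: Lemma~\ref{Lemma.vii.3.1}(2) gives $R_a^M=R_a\cap M$, so that $a\GR^M b \iff a\GR b \iff \nul[a]=\nul[b]$ for $a,b\in M$. In both cases the condition $\rng[a]=\rng[b]$ (resp.\ $\nul[a]=\nul[b]$) forces $a$ and $b$ to have the same rank, so the subspaces automatically lie in the same Grassmann manifold $\gk$ (resp.\ $\gk[n-k]$) and no consistency issue arises.

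For part~(3) there is essentially nothing further to prove. Lemma~\ref{Lemma.vii.3.1}(3) already asserts $D_a^M=GaG=J_a^M$ for every $a\in M$; since the $\GD^M$-class and the $\GJ^M$-class of each element therefore coincide, the relations $\GD^M$ and $\GJ^M$ are equal.

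The only step demanding any care is the $M_n$-level characterization, and within it the alignment of conventions: one must check that the range $\rng[a]$ is indeed the image of the associated endomorphism and that equality of column spaces is correctly matched to equality of the null spaces $\nul[a]$ (the two being orthogonal complements of one another). Once this bookkeeping is fixed, the reductions through Lemma~\ref{Lemma.vii.3.1} are immediate, so I do not anticipate a genuine obstacle here.
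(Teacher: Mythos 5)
Your proposal is correct and takes essentially the same route as the paper: both reduce Green's relations in $M$ to the corresponding relations in $M_n$ and then invoke the classical characterization $a\,\GL\,b \Leftrightarrow \rng[a]=\rng[b]$ (resp.\ $\nul[a]=\nul[b]$), with part (3) obtained by citation. The only difference is bookkeeping---you route the reduction and part (3) through Lemma~\ref{Lemma.vii.3.1}, while the paper cites Proposition II.4.6 of \cite{Howi:76} (Green's relations in a regular subsemigroup) and Theorem 1.4 of \cite{Putc:86} directly, and since Lemma~\ref{Lemma.vii.3.1} is itself established from those same sources and is stated earlier, no circularity arises.
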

\begin{proof}
Since $M$ is a regular subsemigroup of $M_n$, for any $a,b\in M$,
$a \,\GL^M\, b$ if and only if $a\,\GL\, b$
(Proposition II.4.6 \cite{Howi:76}). But $a\,\GL\, b$
if and only if $\rng[a]=\rng[b]$ (see p.57 in \cite{Clif+:61}). This proves the claim regarding $\GL^M$.
The proof of the claim regarding $\GR^M$   is similar.

The result $\GD^M=\GJ^M$ follows from Theorem 1.4 \cite{Putc:86}.  
\end{proof}

For $a\in M$, we  write 
\begin{equation}\label{Eq.vii.Gla}
\Gla=\{\rng[x]: x\in D^M_a\},\quad
\Gra=\{\nul[x]: x\in D^M_a\},
\end{equation}
Note that if
$\Rank(a)=k$ then $\Gla\subseteq \gk$ and $\Gra \subseteq \gk[n-k]$.
We begin with the following lemma which shows that, in the definition
of $\Gla$ [$\Gra$], we need consider only just one $\GR^M$- [$\GL^M$-] 
class contained in $D_a^M$.
\begin{lemma}\label{Lemma.vii.4.2}
If $b\in D^M_a$ then
\begin{enumerate}
\item
$\Gla\,=\,\{\rng[x]\,:\, x\in R^M_b\}$.
\item
$\Gra\,=\,\{\nul[x]\,:\, x\in L^M_b\}$.
\end{enumerate}
\end{lemma}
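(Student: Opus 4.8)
The plan is to prove each of the two assertions by establishing a pair of inclusions, reducing everything to the characterisations of $\GL^M$ and $\GR^M$ in Lemma~\ref{Lemma.vii.4.1} together with the egg-box structure of the $\GD^M$-class $D^M_a$. I would treat part (1) in detail, since part (2) is entirely dual.

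For part (1), the inclusion $\{\rng[x] : x \in R^M_b\} \subseteq \Gla$ is immediate: because $b \in D^M_a$ we have $R^M_b \subseteq D^M_b = D^M_a$, so every range space arising from $R^M_b$ already occurs among the range spaces of $D^M_a$. The content lies in the reverse inclusion $\Gla \subseteq \{\rng[x] : x \in R^M_b\}$. So I would fix $x \in D^M_a$ and produce some $y \in R^M_b$ with $\rng[y] = \rng[x]$. Since $x$ and $b$ lie in the common $\GD^M$-class $D^M_a$, the egg-box structure of a $\GD^M$-class \cite{Clif+:61} guarantees that $L^M_x \cap R^M_b \neq \emptyset$; I would pick $y$ in this intersection. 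Then $y \GR^M b$ gives $y \in R^M_b$, while $y \GL^M x$ gives $\rng[y] = \rng[x]$ by Lemma~\ref{Lemma.vii.4.1}(1). Hence $\rng[x] \in \{\rng[y] : y \in R^M_b\}$, as required.

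Part (2) follows by interchanging the roles of $\GL^M$ and $\GR^M$, and of range and null spaces, throughout. For $x \in D^M_a$ one now selects $y \in R^M_x \cap L^M_b$ (again non-empty by the egg-box property), so that $y \in L^M_b$ and $\nul[y] = \nul[x]$ by Lemma~\ref{Lemma.vii.4.1}(2); the trivial inclusion uses $L^M_b \subseteq D^M_a$.

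The only point requiring care is the appeal to the egg-box structure, namely the non-emptiness of $L^M_x \cap R^M_b$ (respectively $R^M_x \cap L^M_b$) whenever $x \GD^M b$. This is a purely semigroup-theoretic fact holding for the intrinsic Green relations of any semigroup, so it applies to $M$ directly; regularity is not needed for it, although it is regularity (via Lemma~\ref{Lemma.vii.4.1}) that lets one translate the resulting Green-relation membership into the desired equality of range and null spaces. No topological or manifold input enters — this lemma is a bookkeeping step isolating a single $\GR^M$- or $\GL^M$-class before the Grassmann-manifold analysis of $\Gla$ and $\Gra$ proper.
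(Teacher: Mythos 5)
Your proposal is correct and follows essentially the same route as the paper's own proof: both obtain the nontrivial inclusion by using the egg-box structure of $D^M_a$ (i.e.\ $\GD^M = \GL^M\circ\GR^M$) to choose an element of $L^M_x\cap R^M_b$ and then invoke Lemma~\ref{Lemma.vii.4.1} to transfer the equality of range (resp.\ null) spaces, with the reverse inclusion noted as obvious. Your explicit remark that the non-emptiness of $L^M_x\cap R^M_b$ is a general semigroup fact requiring no regularity is a small clarification the paper leaves implicit, but it is not a different argument.
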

\begin{proof}
Let $W\in \Gla$. Then $W=\rng[y]$ for some $y\in D^M_a$. Choose $x\in R^M_b\cap
L^M_y$. Then $\rng[x]=\rng[y]=W$ and so $W\in \{\rng[x]\,:\, x\in R^M_b\}$.
Since the reverse inclusion is obvious we have the result concerning $\Gla$.
The proof of the result concerning $\Gra$ is similar.
\end{proof}

The main result of this  section is that if $\Rank(a)=k$ then $\Gla$ [$\Gra$] is a submanifold of
$\gk$ [$\gk[n-k]$]. We begin with the following lemma.
\begin{lemma}\label{Lemma.4.3}
Let The map $\Gamma_k:D_k\rightarrow \gk$ {\em [}$\Delta_k:D_k\rightarrow \gk[n-k]${\em ]} defined by $x\mapsto \rng[x]$ {\em [}$x \mapsto \nul[x]${\em ]}
is a continuous open surjection.
\end{lemma}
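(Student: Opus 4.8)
The plan is to realise $\Gamma_k$ as the map induced on orbits by an equivariant pair of transitive group actions, so that its surjectivity, continuity and openness all drop out of Theorem \ref{Thm2}. First I would record the elementary fact that $D_k$ is locally closed in $M_n$: the set of matrices of rank $\le k$ is Zariski-closed (vanishing of all $(k+1)$-minors) while the set of matrices of rank $\ge k$ is Zariski-open, so $D_k$, being their intersection, is locally closed and hence a submanifold of $M_n$.

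Next I would introduce the analytic left action $(\gl\times\gl)\times M_n\lro M_n$, $((v,w),x)\mapsto vxw^{-1}$. Since rank is a complete invariant for this equivalence, the action is transitive on $D_k$. Fixing a basepoint $x_0\in D_k$ with $W_0=\rng[x_0]$, the orbit map $\pi:\gl\times\gl\lro D_k$, $(v,w)\mapsto vx_0w^{-1}$, is a continuous surjection; because its orbit $D_k$ is locally closed, Theorem \ref{Thm2} shows that $\pi$ factors as the (open) canonical projection of $\gl\times\gl$ onto the quotient by the stabiliser of $x_0$, followed by an isomorphism of manifolds onto $D_k$. Hence $\pi$ is open, and therefore a quotient map. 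On the target side, $\gl$ acts analytically and transitively on $\gk$ by $v\cdot W=vW$, so the induced action $((v,w),W)\mapsto vW$ of $\gl\times\gl$ has orbit map $\rho:\gl\times\gl\lro\gk$, $(v,w)\mapsto vW_0$, which is a continuous open surjection by the same theorem (its orbit being all of $\gk$).

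The key computation is the equivariance $\rng[vxw^{-1}]=v\,\rng[x]$: since $w^{-1}$ is invertible the image of $xw^{-1}$ equals that of $x$, and left multiplication by $v$ carries $\rng[x]$ onto $v\,\rng[x]$. In particular $\Gamma_k\circ\pi=\rho$, and everything now follows formally. As $\rho$ is surjective, $\Gamma_k$ is surjective; as $\pi$ is a quotient map and $\Gamma_k\circ\pi=\rho$ is continuous, $\Gamma_k$ is continuous; and for open $O\subseteq D_k$ the surjectivity of $\pi$ gives $\Gamma_k(O)=\rho(\pi^{-1}(O))$, which is open because $\pi$ is continuous and $\rho$ is open. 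The argument for $\Delta_k$ is identical, this time using $\nul[vxw^{-1}]=w\,\nul[x]$ and the transitive action $((v,w),U)\mapsto wU$ of the second factor on $\gk[n-k]$.

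I expect the main obstacle to be the careful justification that $\pi$ is open, i.e.\ that Theorem \ref{Thm2} genuinely applies; this rests entirely on $D_k$ being locally closed in $M_n$, after which the passage to a quotient map and the consequent continuity and openness of $\Gamma_k$ are automatic and mirror the orbit-map reasoning already used in Corollary \ref{Cor.vii.1}. A more hands-on alternative, avoiding the group action, would establish continuity by selecting $k$ independent columns of $x$ near a given point (a continuous local lift of $\Gamma_k$ through the quotient map $q$) and openness by building, through any prescribed $x_0$, a continuous section $W\mapsto\sigma(W)$ of $\Gamma_k$ from a Grassmann chart; but the equivariant formulation above is shorter and fits the machinery already developed.
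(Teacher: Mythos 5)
Your argument is sound but takes a genuinely different route from the paper. The paper works bare-handed: for continuity it fixes $x_0\in D_k$, chooses a basis so that the first $k$ rows of any nearby $x\in D_k$ remain independent and span $\rng[x]$, and thereby produces an explicit $\epsilon$-ball mapped into a given open set via the quotient map $q:M(k,n;k)\rightarrow \gk$; for openness it computes the saturation $\Ga_k^{-1}(\Ga_k(U'))=\cup_{u\in\gl}\,uU'$, a union of open sets, and concludes since $\Ga_k$ is a quotient map. (The ``hands-on alternative'' you sketch in your last sentence is, for the continuity half, essentially the paper's actual proof.) Your main argument instead factors $\Ga_k$ through two orbit maps of the $\gl\times\gl$-action: $D_k$ is locally closed (rank $\le k$ is closed, rank $\ge k$ open) and is a single orbit by rank factorization, so Theorem \ref{Thm2} makes the orbit map $\pi$ an open quotient map, exactly as in Corollary \ref{Cor.vii.1}; then surjectivity, continuity and openness of $\Ga_k$ all follow formally from the equivariance identity $\Ga_k\circ\pi=\rho$. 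What your formulation buys is uniformity --- the same formal scheme disposes of $\De_k$, of the restrictions in Proposition \ref{Th.iii.4.3}, and of the later maps $\Ga_a^M$ with $\gl$ replaced by $G$ --- at the price of invoking the analyticity of the actions on $M_n$ and on $\gk$ and the Lie-theoretic machinery of \S 16.10 of Dieudonn\'e, where the paper's computation is elementary and self-contained.

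One detail needs repair, though it does not affect the structure of your proof. Your equivariance formulas are written in the column-space convention, whereas in this paper $\rng[x]$ is the \emph{row} space of $x$: this is explicit in the paper's own proof ($\rng[x]$ is spanned by the rows of the submatrix $X$), and it is forced by $a\,\GL\,b\Leftrightarrow\rng[a]=\rng[b]$ together with $L_a=\gl a$; compare also the action $\Act{4}:(u,W)\mapsto Wu^{-1}$. Under that convention one has $\rng[vxw^{-1}]=\rng[x]\,w^{-1}$ and $\nul[vxw^{-1}]=\nul[x]\,v^{-1}$, so it is the \emph{second} factor of $\gl\times\gl$ that acts on $\gk$ (by $W\mapsto Ww^{-1}$) and the \emph{first} that acts on $\gk[n-k]$ --- the opposite of what you wrote. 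The fix is a one-line swap of the two factors (or a switch to the right-action form the paper itself uses), after which your factorization argument goes through verbatim.
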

\begin{proof}
Let $U$ be an open set in \gk.
Let $x_0\in \Ga_k^{-1}(U)$,  $\Ga_k(x_0)=W\in U$ and $\nul[a]=N$.
Choose an ordered  basis $B$ for $V$ such that the last 
$(n-k)$-vectors
in $B$ form an ordered basis for \nul[a]. 
Relative to $B$,
$x_0=\left[\begin{smallmatrix}X_0\\O\end{smallmatrix}\right]$ 
where $X_0\in M(k,n;k)$. 
Since $q^{-1}(U)$ is open in
$M(k,n;k)$ and $X_0\in q^{-1}(U)$
we can find $\epsilon>0$ such that 
$$
B_M(X_0,\epsilon)=\{X\in M(k,n;k)\, :\, \|X-X_0\| < \epsilon\}
\subseteq q^{-1}(U).
$$
Let $ B(x_0,\epsilon)=\{x\in D_k\,:\, \|x-x_0\| < \epsilon\} $.
If $x\in B(x_0,\epsilon)$ and $X$ is the matrix formed 
by the first $k$
rows of $x$  then  $\|X_0-X\| <\epsilon$ so that 
$X\in B_M(X_0,\epsilon)$.
Since $X\in M(k,n;k)$, the rank of $X$ is $k$. 
Since $\Rank(x)$ is also
$k$, the range $\rng[x]$ of $x$ is the subspace spanned 
by the rows of
$X$. Thus we have $\rng[x]=q(X)$. Therefore,
$
\Gamma_k(x)=\rng[x]=q(X)\in U.
$
Hence  we have
$
B(x_0,\epsilon)\subseteq \Ga_k^{-1}(U).
$
This shows that $\Ga_k^{-1}(U)$ is open.
It follows that $\Ga_k$ is continuous.

To show that $\Ga_k$ is open, let $U'$ be an open set in $D_k$. Now
$
\Ga_k^{-1}(\Ga(U'))=\cup_{x\in U'} L_x.
$
Also, $y\in \cup_{x\in U'} L_x$ if and only if
$y\GL x$ for some $x\in U'$. Also, $y\GL x$ if and only if $y=ux$
for some $u\in \gl$.
Thus, $y\in \cup_{x\in U'} L_x$ if and only if $y=ux$
for some $x\in U'$ and for some $u\in\gl$.
Therefore we have
$ \Ga_k^{-1}(\Ga_k(U'))=\cup_{u\in \gl} uU'$.
The map $x\mapsto ux$ is a homeomorphism of $M_n$ onto itself.
Since $U'$ is open in $D_k$, $uU'$ is open in $D_k$ 
for every $u\in \gl$.
Therefore $\Ga_k^{-1}(\Ga_k(U'))$ is a union of open sets 
in $D_k$ and
therefore is itself open. This shows that $\Ga_k$ is an open map.

In a similar way one can show that the map $\Delta_k:D_k\rightarrow \gk[n-k]$
defined by $x \mapsto \nul[x]$ is also a continuous open surjecion.
\end{proof}
\begin{prop}\label{Th.iii.4.3}
The restrictions $\Ga_k|R_a$ and 
$\De_k|L_a$ are continous open surjections. 
\end{prop}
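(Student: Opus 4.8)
The plan is to lean on the fact (Lemma~\ref{Lemma.4.3}) that $\Ga_k$ and $\De_k$ are already continuous open surjections on all of $D_k$, and to control the restrictions by means of the transitive group actions underlying $R_a$, $L_a$ and the Grassmannians. Throughout fix $a$ with $\Rank(a)=k$, and write $W_0=\rng[a]\in\gk$ and $N_0=\nul[a]\in\gk[n-k]$. Continuity is immediate, since the restriction of the continuous map $\Ga_k$ to the subspace $R_a$ is continuous, and likewise for $\De_k|L_a$. The real content lies in surjectivity and, above all, in openness.

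For the setup I would observe that right multiplication by units, the map $(u,x)\mapsto xu^{-1}$, carries $R_a$ onto itself with $R_a$ as the single orbit of $a$: two elements of $M_n$ are $\GR$-related exactly when they share a null space, right multiplication by a unit preserves null spaces, and every element with null space $N_0$ is so obtained. The induced map on subspaces, $(u,W)\mapsto Wu^{-1}$, is the natural analytic action of \gl\ on \gk, which is transitive. The crucial compatibility is the equivariance $\rng[xu^{-1}]=\rng[x]\,u^{-1}$, so that $\Ga_k|R_a$ intertwines the two actions. Surjectivity then drops out at once: $\Ga_k(R_a)$ is a non-empty \gl-invariant subset of the transitive space \gk, hence all of \gk. (Equivalently, $R_a$ and the $\GL$-class of matrices of range $W$ both lie in $D_k$, so they meet by the elementary theory of Green's relations, exhibiting every $k$-dimensional $W$ as a range of an element of $R_a$.)

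The hard part will be openness, and here I would run a diagram chase through the orbit maps rather than try to restrict the open map of Lemma~\ref{Lemma.4.3} directly, since restrictions of open maps to subspaces need not be open. Let $p\colon\gl\lro R_a$, $u\mapsto au^{-1}$, be the orbit map onto $R_a$; it is a continuous surjection because the action is analytic and $R_a$ is its orbit. Let $p'\colon\gl\lro\gk$, $u\mapsto W_0u^{-1}$, be the orbit map onto \gk; as this action is transitive its orbit is the whole, hence locally closed, manifold \gk, so by Theorem~\ref{Thm2} (exactly as in the derivation of Corollary~\ref{Cor.vii.1}(3), factoring $p'$ through the canonical projection of \gl\ onto the quotient by the stabiliser of $W_0$) the map $p'$ is a continuous open surjection. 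Equivariance gives the commuting triangle $\Ga_k|R_a\circ p=p'$. Now for any open $U'\subseteq R_a$ the set $p^{-1}(U')$ is open, and since $p$ is surjective
$$
(\Ga_k|R_a)(U')=(\Ga_k|R_a)\bigl(p(p^{-1}(U'))\bigr)=p'\bigl(p^{-1}(U')\bigr),
$$
which is open because $p'$ is an open map. Hence $\Ga_k|R_a$ is open.

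The argument for $\De_k|L_a$ is entirely parallel, using left multiplication $(u,x)\mapsto ux$, whose orbit of $a$ is $L_a$ (elements are $\GL$-related iff they share a range), the natural action $(u,Y)\mapsto Yu^{-1}$ of \gl\ on $\gk[n-k]$, and the equivariance $\nul[ux]=\nul[x]\,u^{-1}$. I expect the only genuinely delicate points to be checking that the two actions in each case are analytic, transitive and equivariant with the stated orbit maps, and the appeal that the Grassmannian orbit map $p'$ is open; the latter is the homogeneous-space instance of Theorem~\ref{Thm2}, and it is precisely what lets the diagram chase yield openness of the restriction even though $\Ga_k|R_a$ is far from injective, its fibres being the $\GH$-classes inside $R_a$.
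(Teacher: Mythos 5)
Your proof is correct, but it follows a genuinely different route from the paper's.

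The paper argues intrinsically inside $R_a$: the fibres of $\Ga_k|R_a$ are the $\GH$-classes, so for open $U\subseteq R_a$ the saturation is $\bigcup_{x\in U}H_x$; fixing $e\in E(R_a)$ and using $H_b=H_eb$ for all $b\in R_a$, this saturation equals $\bigcup_{c\in H_e}cU$, a union of images of $U$ under the homeomorphisms $\lambda_c:x\mapsto cx$ of $R_a$ onto itself, hence open in $R_a$; openness of $\Ga_k(U)$ then follows from the assertion that the restricted map is a quotient map --- which is implicitly justified because, in a basis adapted to $\nul[a]$ (as set up in the proof of Lemma~\ref{Lemma.4.3}), $R_a$ is identified with $M(k,n;k)$ and $\Ga_k|R_a$ with the defining quotient map $q$ of $\gk$. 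You instead exploit the homogeneous-space structure: $R_a$ and $\gk$ are each single orbits of $\gl$, the restriction $\Ga_k|R_a$ is equivariant via $\rng[xu^{-1}]=\rng[x]u^{-1}$, and openness is transported along the commuting triangle $(\Ga_k|R_a)\circ p=p'$ using that the Grassmannian orbit map $p'$ is open (Theorem~\ref{Thm2} together with openness of the canonical projection onto $\gl/\Stb{}$, exactly the derivation behind Corollary~\ref{Cor.vii.1}); your identity $(\Ga_k|R_a)(U')=p'(p^{-1}(U'))$ is a clean and valid chase, and surjectivity via invariance under a transitive action is fine. What your route buys: it sidesteps the one delicate point in the paper's proof --- the phrase ``since $\Ga_k$ is a quotient map,'' which as written needs the \emph{restriction} of a quotient map to be a quotient map (false in general, true here only through the identification with $q$) --- and it is the same template the paper itself uses later for Theorem~\ref{Th.vii.4.1a}. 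What the paper's route buys: it is purely semigroup-topological (Green translations plus the quotient topology, no orbit manifolds), and, more importantly, it transfers verbatim to the monoid setting, which is how the unproved analog Lemma~\ref{Th.vii.4.1'} is meant to be obtained; your method needs the target orbit to be locally closed, which is trivial here (by transitivity the orbit is all of $\gk$) but is not available a priori for $\Gla$ in $M$ --- indeed the paper proves Theorem~\ref{Th.vii.4.1a} \emph{from} Lemma~\ref{Th.vii.4.1'}, so in the generalization your argument would run the dependence in a circle, whereas for the present proposition in $M_n$ it is perfectly sound.
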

\begin{proof}
For any open set $U\subseteq R_a$, we have
$ \Ga_k^{-1}(\Ga_k(U))=\cup_{x\in U} H_x$.
Now let $e\in E(R_a)$. For all $c\in H_e$, the map $\lambda_c:x\mapsto cx$ is a bijection of $R_a$
onto itself with inverse $\lambda_{c'}$ where $c'$ is the unique inverse of $c$
in $H_e$. Since these translations are continuous  the
map $\lambda_c$ is a homeomorphism  and so $\lambda_c(U)=cU$ is an open set in $R_a$.
Further, since $H_b=H_eb$ for all $b\in R_a$, we have
$ \Ga_k^{-1}(\Ga_k(U))= \cup_{c\in H_e} cU $
which is open in $R_a$. Since $\Ga_k$ is a  quotient map, $\Ga_k(U)$ is open in 
\gk. This proves that $\Ga_k$ is an open map.  Since $\Ga_k$ is a quotient
map, clearly it is continuous. The proof is similar for $\De|L_a$.
\end{proof}

We now define the following maps:
\begin{alignat}{2}
\Ga_a^M\,& :\,D_a^M\rightarrow \Gla\, ,\quad     && x\mapsto
\rng[x],\label{Eq.vii.4.1'}\\
\De_a^M\,& :\,D_a^M\rightarrow \Gra\, ,\quad     && x\mapsto
\nul[x].\label{Eq.vii.4.2'}
\end{alignat}
Note that if $a\in D_k$ then $\Ga_a^M=\Ga_k|D_a^M$ and $\De_a^M=\De_k
|D_a^M$.
We now have the following result.
\begin{lemma}\label{Th.vii.4.1}
The map  $\Ga_a^M$  {\em [}$\De_a^M${\em ]} is a continuous  open surjection. Hence
the topology of $\Gla$ {\em [}$\Gra${\em ]} is the quotient topology induced by $\Gamma_a^M$ {\em [}$\Delta_a^M${\em ]}.
\end{lemma}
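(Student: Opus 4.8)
The plan is to treat $\Ga_a^M$ (the case of $\De_a^M$ being entirely symmetric) and to separate the three assertions. Continuity and surjectivity are immediate: $\Ga_a^M=\Ga_k\,|\,D_a^M$ is the restriction of the continuous map $\Ga_k$ of Lemma~\ref{Lemma.4.3}, and it is onto $\Gla$ by the very definition of $\Gla$ in~\eqref{Eq.vii.Gla}. All the content is in the openness, and the closing sentence about the quotient topology is then automatic, since a continuous open surjection is a quotient map and its target necessarily carries the quotient topology.

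First I would pin down the fibres. For $x,y\in D_a^M$ we have $\Ga_a^M(x)=\Ga_a^M(y)$ iff $\rng[x]=\rng[y]$, which by Lemma~\ref{Lemma.vii.4.1}(1) means $x\,\GL^M\,y$, i.e. $y\in L_x^M=Gx$ by Lemma~\ref{Lemma.vii.3.1}(1). Hence the fibres of $\Ga_a^M$ are exactly the $\GL^M$-classes contained in $D_a^M$, and for $U\subseteq D_a^M$ the saturation is $(\Ga_a^M)^{-1}(\Ga_a^M(U))=\bigcup_{g\in G}gU$. Since each left translation $x\mapsto gx$ ($g\in G$) is a homeomorphism of $M_n$ carrying $D_a^M=GaG$ onto itself, this saturation is open in $D_a^M$ whenever $U$ is. This is exactly the saturation step used in Proposition~\ref{Th.iii.4.3}.

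The step that is \emph{not} formal is the passage from ``the saturation of every open set is open'' to ``$\Ga_a^M$ is open onto the subspace $\Gla\subseteq\gk$'': this needs $\Gla$ to carry a topology making $\Ga_a^M$ a quotient map, and that is where the monoid geometry enters. My plan is to realise $\Gla$ as a group orbit. The action $\Act2$ of $G$ on $M_n$ descends, under $x\mapsto\rng[x]$, to an analytic left action $\Act4:G\times\gk\to\gk$, $(w,W)\mapsto Ww^{-1}$, which is the restriction to $G$ (an analytic subgroup of $\gl$ by Lemma~\ref{Lemma.vii.2.1}) of the natural analytic action of $\gl$ on the homogeneous space $\gk$. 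Using $D_a^M=GaG$ (Lemma~\ref{Lemma.vii.3.1}) together with Lemma~\ref{Lemma.vii.4.1}(1) one checks that $\Act4(G,\rng[a])=\Gla$ and that the orbit map factors as $\NAct{4}{\rng[a]}=\Ga_a^M\circ\NAct{2}{a}$.

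I expect the main obstacle to be showing that the orbit $\Gla$ is locally closed in $\gk$. Once that is in hand, Theorem~\ref{Thm2} makes $\Gla$ a submanifold and $\Map{4,\rng[a]}$ an isomorphism, so the orbit map $\NAct{4}{\rng[a]}:G\to\Gla$ is a continuous open surjection; since $\NAct{2}{a}$ is itself a continuous open surjection (Corollary~\ref{Cor.vii.1}), hence a quotient map, the factorisation forces $\Ga_a^M\,|\,R_a^M$ to be open onto $\Gla$. I would then upgrade this to all of $D_a^M$: for $U$ open in $D_a^M$ the saturation $S=\bigcup_{g\in G}gU$ is an open union of whole fibres, and as $D_a^M$ is a single $\GD^M$-class each of its $\GL^M$-classes meets the $\GR^M$-class $R_a^M$, so $\Ga_a^M(U)=\Ga_a^M(S)=(\Ga_a^M\,|\,R_a^M)(S\cap R_a^M)$ is open in $\Gla$. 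For the local closedness itself I would argue as for $D_a^M$ in Lemma~\ref{Lemma.vii.3.3}, exhibiting $\Gla$ as the image under the open map $\Ga_k|R_a$ (Proposition~\ref{Th.iii.4.3}) of the closed set $R_a^M=R_a\cap M$ and pushing a Zariski open–closed decomposition through that map; the delicate point is that an open map need not carry the closed set $R_a^M$ to a closed set, so one must saturate before taking closures.
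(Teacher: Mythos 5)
Your fibre computation, the factorisation $\NAct{4}{\rng[a]}=\Ga_a^M\circ\NAct{2}{a}$, and the transfer of openness from $R_a^M$ to all of $D_a^M$ (using that every $\GL^M$-class inside the $\GD^M$-class $D_a^M$ meets $R_a^M$) are all correct; the factorisation is in fact exactly the commutative diagram the paper itself uses later. But your argument has a genuine gap at precisely the point you flag as ``the main obstacle'': the local closedness of $\Gla$ in $\gk$. Everything downstream depends on it --- without it Theorem \ref{Thm2} cannot be applied, so you get neither the submanifold structure, nor $\Map{4,\rng[a]}$ being an isomorphism, nor the openness of the orbit map $\NAct{4}{\rng[a]}$, and the whole chain collapses. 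The sketch you offer (pushing an open--closed decomposition of $R_a^M=R_a\cap M$ through the open map $\Ga_k|R_a$ of Proposition \ref{Th.iii.4.3}) founders on the difficulty you yourself name: an open map need not carry closed sets to closed sets, and the relevant saturation $\{x\in R_a:\rng[x]\in\Gla\}$ is a union of $\GL$-classes meeting $M$ with no visible closedness, so ``saturating before taking closures'' does not obviously rescue it. (Local closedness is in fact true --- e.g.\ by Chevalley's theorem on orbits, applied to the algebraic group $G'$ of Lemma \ref{Lemma.vii.2.1} acting algebraically on the projective variety $\gk$ --- but that algebro-geometric input is supplied neither by you nor by the paper.)

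The paper's own proof runs the other way and needs none of this machinery: continuity holds because $\Ga_a^M=\Ga_k|D_a^M$ with $\Ga_k$ continuous (Lemma \ref{Lemma.4.3}), and openness is obtained by rerunning the saturation argument of Lemma \ref{Lemma.4.3} with $M_n$ replaced by $M$ and $\gl$ by $G$ --- that is, your second paragraph \emph{is} the whole of the paper's proof. The lemma is then an ingredient in Theorem \ref{Th.vii.4.1a}, where the submanifold structure of $\Gla$ is deduced from the quotient description of its topology without ever invoking local closedness of the orbit; your plan reverses this dependence, so even if completed it would amount to proving Theorem \ref{Th.vii.4.1a} first by heavier means and then specialising. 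Your underlying observation --- that the step from ``saturations of open sets are open'' to ``$\Ga_a^M$ is open onto the subspace $\Gla$'' is not purely formal, since it implicitly uses a quotient description of the target that is available for $\gk$ via $q$ in Lemma \ref{Lemma.4.3} but is not given a priori for the subspace $\Gla$ --- is a fair criticism of the paper's terseness at this point; but the repair you choose trades a glossed step for a larger unproven claim, and as submitted the proof is incomplete.
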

\begin{proof}
By Lemma \ref{Lemma.vii.4.1}, for any $a,b\in M$ we have $a\GL^M b$ if
and only if $\rng[a]=\rng[b]$.  Hence the partition of $D_a^M$ induced
by $\Ga_a^M$ is the partition of $D_a^M$ into $\GL$-classes.

By Lemma  \ref{Lemma.4.3}, the map $\Ga_k$ is continuous. Hence its
restriction to $D_a^M$, which is the map $\Ga_a^M$, is also continuous.
It is easy to see that if, in the proof of Lemma  \ref{Lemma.4.3},
in the proof of the fact that $\Ga_k$ is open
 we replace $M_n(\fldk)$ by $M$ and \gl\ by
$G$, then the resulting argument is still valid. It follows that the map
$\Ga_a^M$ is open also.
\end{proof}

The analog of Proposition \ref{Th.iii.4.3} is also valid. We state the
result without proof.
\begin{lemma}\label{Th.vii.4.1'}
The restriction 
$
\Ga_a^M|R_a^M:R_a^M\lro \Gla
$
 {\em [}$\De_a^M|L_a^M:L_a^M\lro \Gra ${\em ]}
is a continuous open surjection.
Therefore,  the topology of $\Gla$ {\em [}$\Gra${\em ]} is the
quotient topology induced by this map.
\end{lemma}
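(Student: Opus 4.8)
The plan is to prove the three assertions—surjectivity, continuity, and openness—separately, following the pattern of Proposition \ref{Th.iii.4.3}, and then to read off the quotient-topology statement as a formal consequence. Surjectivity is immediate from Lemma \ref{Lemma.vii.4.2}: taking $b=a$ there gives $\Gla=\{\rng[x]:x\in R_a^M\}$, so $\Ga_a^M|R_a^M$ is onto. Continuity is equally quick, since $\Ga_a^M|R_a^M$ is the restriction to the subspace $R_a^M$ of the continuous map $\Ga_a^M$ of Lemma \ref{Th.vii.4.1}. The whole content of the lemma therefore resides in the openness of $\Ga_a^M|R_a^M$, and this is where the argument of Proposition \ref{Th.iii.4.3} must be transplanted from $M_n$ and $\gl$ to $M$ and $G$.

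For openness I would argue as follows. Because $M$ is regular, the class $R_a^M$ contains an idempotent $e$, so $E(R_a^M)\ne\emptyset$; let $H_e^M$ be the (group) $\GH^M$-class of $e$. For $c\in H_e^M$ the left translation $\lambda_c:x\mapsto cx$ carries $R_a^M$ onto itself: since $c\,\GH^M\,e$, Green's Lemma shows $\lambda_c$ maps $R_e^M=R_a^M$ bijectively onto $R_c^M=R_a^M$, with inverse the left translation by the group inverse $c^{-1}$ of $c$ in $H_e^M$. As the product in $M$ is continuous, $\lambda_c$ and $\lambda_{c^{-1}}$ are continuous, so each $\lambda_c$ is a self-homeomorphism of $R_a^M$. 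Now fix an open $U\subseteq R_a^M$. By Lemma \ref{Lemma.vii.4.1} two elements of $R_a^M$ have the same range precisely when they are $\GL^M$-related, hence—being in one $\GR^M$-class—precisely when they are $\GH^M$-related; thus the fibres of $\Ga_a^M|R_a^M$ are the classes $H_x^M=H_e^M x$, and the saturation of $U$ is $\bigcup_{x\in U}H_x^M=\bigcup_{c\in H_e^M}cU=\bigcup_{c\in H_e^M}\lambda_c(U)$, a union of open sets and therefore open in $R_a^M$.

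The remaining—and genuinely delicate—step is to pass from ``the saturation of $U$ is open in $R_a^M$'' to ``$\Ga_a^M(U)$ is open in $\Gla$''. This inference is legitimate exactly when $\Ga_a^M|R_a^M$ is a quotient map, and establishing that is the main obstacle. One cannot simply invoke Lemma \ref{Th.vii.4.1}: the topology on $\Gla$ is the quotient topology for $\Ga_a^M:D_a^M\to\Gla$, whose fibres are the \emph{full} $\GL^M$-classes, so the relevant saturation there is $\bigcup_{x\in U}L_x^M=GU$ in $D_a^M$, and since $R_a^M$ is only a submanifold—not an open subset—of $D_a^M$, openness in $R_a^M$ does not transfer to openness in $D_a^M$. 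The way I would close the gap mirrors the hidden mechanism of Proposition \ref{Th.iii.4.3}: exhibit $\Ga_a^M|R_a^M$ as a quotient map directly, either by producing continuous local sections near each $W_0\in\Gla$ (a continuous surjection admitting local sections is open), or by the device used over $M_n$—factoring a known quotient map through $R_a^M$. In $M_n$ one has the section $\iota:M(k,n;k)\to R_a$, $P\mapsto[\,P^{\mathsf T}\mid 0\,]$ (in a basis adapted to $\nul[a]$), which satisfies $q=(\Ga_k|R_a)\circ\iota$ and makes $\Ga_k|R_a$ a quotient map by the standard composite-quotient theorem; the task for $M$ is to produce local sections landing inside $R_a^M=R_a\cap M$, after which openness—and with it the final quotient-topology assertion, an open continuous surjection being automatically a quotient map—follows at once.
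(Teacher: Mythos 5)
Your surjectivity and continuity steps are correct and economical (Lemma \ref{Lemma.vii.4.2} with $b=a$; restriction of the continuous map of Lemma \ref{Th.vii.4.1}), and your saturation computation --- the fibres of $\Ga_a^M|R_a^M$ are the classes $H^M_x=H^M_ex$, so the saturation of an open $U\subseteq R^M_a$ is $\bigcup_{c\in H^M_e}cU$, open in $R^M_a$ --- is exactly the transplanted Proposition \ref{Th.iii.4.3}, which is in fact all the paper offers: it states this lemma \emph{without proof}, as ``the analog'' of that proposition. You are also right to flag the passage from ``saturation open in $R^M_a$'' to ``$\Ga_a^M(U)$ open in $\Gla$'' as the delicate point: $R^M_a$ is not a $\Ga_a^M$-saturated subset of $D^M_a$, and a quotient map restricted to a non-saturated subspace need not be a quotient map (the same looseness is present in the paper's own wording of Proposition \ref{Th.iii.4.3}). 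But precisely there your text stops being a proof: the local-section device is executed only over $M_n$ (where, in the paper's row-space conventions, the global section is $P\mapsto\left[\begin{smallmatrix}P\\O\end{smallmatrix}\right]$ rather than $[\,P^{\mathsf T}\mid 0\,]$, identifying $\Ga_k|R_a$ with $q$), while for $M$ you explicitly leave ``the task \ldots\ to produce local sections landing inside $R^M_a$'' undone. That task is not routine, and the natural source of such sections --- the homogeneous-space description $\Gla\cong G/\Stb{4,W_0}$, $W_0=\rng[a]$, of Theorem \ref{Th.vii.4.1a} --- is unavailable here, because the paper proves Theorem \ref{Th.vii.4.1a} \emph{from} this lemma; invoking it would be circular.

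The gap can be closed with results already established, and without any sections: saturate over all of $G$ rather than over $H^M_e$. For open $U\subseteq R^M_a$, the full $\Ga_a^M$-saturation of $U$ in $D^M_a$ is $\bigcup_{x\in U}L^M_x=GU$ by Lemma \ref{Lemma.vii.3.1}, and $GU$ is open in $D^M_a$: by Corollary \ref{Cor.vii.1} the map $\NAct{2}{a}:G\to R^M_a$, $w\mapsto aw^{-1}$, is a continuous open surjection, so $V=\NAct{2}{a}^{-1}(U)$ is open in $G$ and $\NAct{2}{a}(V)=U$; by Corollary \ref{Cor.vii.Map3} together with Theorem \ref{Thm2}, $\NAct{3}{a}=\Map{3,a}\circ\pi:G\times G\to D^M_a$ is open, being the open canonical projection $\pi$ followed by the homeomorphism $\Map{3,a}$; hence $GU=\NAct{3}{a}(G\times V)$ is open in $D^M_a$. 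Since $u\in G$ gives $ux\,\GL^M\,x$, i.e.\ $\rng[ux]=\rng[x]$ (Lemmas \ref{Lemma.vii.3.1} and \ref{Lemma.vii.4.1}), we have $\Ga_a^M(U)=\Ga_a^M(GU)$, and this is open in $\Gla$ because $\Ga_a^M$ is an open map (Lemma \ref{Th.vii.4.1}). Note that this route makes your $H^M_e$-translate computation superfluous; and once $\Ga_a^M|R^M_a$ is known to be a continuous open surjection, the final quotient-topology assertion is, as you say, formal.
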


We now consider the map
\begin{equation*}\label{Eq.vii.Act4}
\Act4: G\times \gk \lro \gk\, ,\quad(u,W)\mapsto Wu^{-1}
\end{equation*}
which defines a left action of $G$ on \gk.
By introducing local coordinates in \gk\  we can see that
this action  is analytic. Theorem \ref{Thm1} implies that, for $W\in \gk$,
$\Stb{4,W}$  is an analytic subgroup of $G$.
\begin{prop}\label{Prop.4.7}
For $e\in E(M)$, we have $\Stb{4,\rng[e]} = \{u\in G : eue=eu\}$.
\end{prop}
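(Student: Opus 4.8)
The plan is to unwind the definition of the stabiliser and reduce the subspace condition it imposes to the stated matrix identity. By the definition of $\Act4$, a unit $u\in G$ lies in $\Stb{4,\rng[e]}$ precisely when $\rng[e]u^{-1}=\rng[e]$. First I would use the invertibility of $u$ to simplify this. Applying the (bijective) right-multiplication $W\mapsto Wu$ to both sides turns $\rng[e]u^{-1}=\rng[e]$ into the equivalent condition $\rng[e]=\rng[e]u$; and since $u$ is invertible the image $\rng[e]u$ has the same dimension $k$ as $\rng[e]$, so the equality $\rng[e]u=\rng[e]$ is in turn equivalent to the single inclusion $\rng[e]u\subseteq\rng[e]$. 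Thus it suffices to prove
$$
\rng[e]u\subseteq\rng[e]\iff eue=eu.
$$

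Next I would bring in the idempotency of $e$. With the conventions of this section $\rng[e]$ is the range of $e$, i.e. the image of the endomorphism $v\mapsto ve$, and because $e$ is idempotent this endomorphism is a projection onto $\rng[e]$. This yields the clean description
$$
\rng[e]=\{w\in V:\,we=w\},
$$
whose verification is immediate: if $w=ve$ then $we=ve^2=ve=w$, and conversely $we=w$ forces $w=we\in\rng[e]$. This fixed-space description is the key device that converts the geometric inclusion into an algebraic identity.

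Finally I would establish the displayed equivalence by testing on vectors. For the forward direction, assume $\rng[e]u\subseteq\rng[e]$; for every $v\in V$ the vector $ve$ lies in $\rng[e]$, hence $veu\in\rng[e]u\subseteq\rng[e]$, and the fixed-space description gives $veue=veu$. Since this holds for all $v$, the endomorphisms $eue$ and $eu$ agree, so $eue=eu$. For the converse, assume $eue=eu$; an arbitrary element of $\rng[e]u$ has the form $veu$, and $veue=v(eue)=v(eu)=veu$ shows it is fixed by $e$, hence lies in $\rng[e]$, so $\rng[e]u\subseteq\rng[e]$. Stringing together the three equivalences then gives $\Stb{4,\rng[e]}=\{u\in G:eue=eu\}$.

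I expect the only real subtlety to be bookkeeping with the right-action conventions of this section (the appearance of $u^{-1}$ in $\Act4$ and the fact that $\rng[e]$ is a range written on the left of the matrix), together with the dimension argument that legitimises replacing the equality $\rng[e]u=\rng[e]$ by the mere inclusion $\rng[e]u\subseteq\rng[e]$; once the fixed-space description of $\rng[e]$ is in hand, the two implications of the final equivalence are routine.
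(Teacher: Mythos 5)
Your proof is correct, but it takes a genuinely different route from the paper's. Both arguments begin the same way, unwinding the stabiliser condition $\rng u^{-1}=\rng$ to $\rng=\rng u=\rng[eu]$ (and your conventions --- row vectors, range equal to the row space, so that $\rng u=\rng[eu]$ --- do match the paper's). From there the paper stays inside semigroup theory: it reads the equality of ranges as the Green relation $e\,\GL\,eu$, invokes Lemma \ref{Lemma.vii.3.1} (namely $L^M_e=Ge$, which rests on the regularity of $M$) to produce $v\in G$ with $ve=eu$, and then converts between $ve=eu$ and $eue=eu$ by an explicit manipulation (the converse direction again passes through $\GL^M$ and Lemma \ref{Lemma.vii.3.1}, exhibiting the witness $v=wu$). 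You instead reduce the equality of subspaces to the single inclusion $\rng u\subseteq \rng$ by a dimension count and then verify the equivalence with $eue=eu$ directly from the fixed-space description $\rng=\{w\in V: we=w\}$ of the range of an idempotent. Your argument is purely linear-algebraic: it never uses Lemma \ref{Lemma.vii.3.1}, and in fact it establishes the identity for any idempotent $e$ and any $u\in \gl$, so it makes no appeal to the regularity or irreducibility of $M$; the paper's version, by contrast, keeps the computation inside the Green-relation framework used throughout the section, at the cost of quietly using the standing hypotheses on $M$. Both proofs are complete; yours is the more elementary and the more general.
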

\begin{proof}
Let $u$ be an arbitrary element in $G$.
\begin{alignat*}{2}
u\in \Stb{4,\rng[e]}\,& \Leftrightarrow\, \rng u^{-1}=\rng
                        && \quad\text{by definition of $S_W$}\\
                  & \Leftrightarrow\, \rng=\rng u
                        &&\\
                  & \Leftrightarrow\, \rng=\rng[eu]
                        &&\\
                  & \Leftrightarrow\, e\, \GL\, eu
                        && \\
                  & \Leftrightarrow\, ve=eu\quad\text{for some}\quad v\in G
                        &&\quad\text{(Lemma  \ref{Lemma.vii.3.1})}
\end{alignat*}
Now if $eu=ve$ then we have $eue=eu$. Conversely, if $eue=eu$ then
$e=e(ueu^{-1})$ which implies that $e\,\GL^M\, ueu^{-1}$ and
so, by Lemma
\ref{Lemma.vii.3.1}, we can find $w\in G$ such that $e=w(ueu^{-1})$. Taking
$v=wu$ we get $eu=ve$.
Therefore
$
\Stb{4,\rng[e]}=\{u\in G\,:\, eue=eu\}.
$
\end{proof}

In the terminology and notations of \cite{Putc:86} (see p.48 \cite{Putc:86}),  
$\Stb{4,\rng}$
is the {\em left centralizer} $C_G^l(e)$   of $e$ in $G$. 
\begin{thm}\label{Th.vii.4.1a}
Let $a\in M$, $\Rank(a)=k$. Then $\Gla$ is a submanifold of  $\gk$. 
\end{thm}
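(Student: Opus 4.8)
The plan is to exhibit $\Gla$ as a single orbit of the action $\Act4$ of the analytic group $G$ on the Grassmann manifold $\gk$ and then to invoke Theorem \ref{Thm2}. The only hypothesis of that theorem not yet in hand is that the orbit be locally closed in $\gk$; the remaining ingredients are already recorded, namely that $G$ is an analytic group (Lemma \ref{Lemma.vii.2.1}), that $\Act4$ is analytic, and that the stabiliser $\Stb{4,\rng[e]}=\Cle$ is an analytic subgroup of $G$ (the remark preceding Proposition \ref{Prop.4.7}, together with Proposition \ref{Prop.4.7} itself). So the argument separates cleanly into an algebraic identification of $\Gla$ as an orbit and a topological verification of local closedness.

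First I would fix a convenient base point. Since $M$ is regular, $D^M_a$ contains an idempotent $e$, and $\Rank(e)=\Rank(a)=k$, so $\rng[e]\in\gk$. Taking $b=e$ in Lemma \ref{Lemma.vii.4.2} gives $\Gla=\{\rng[x]:x\in R^M_e\}$, while Lemma \ref{Lemma.vii.3.1} gives $R^M_e=eG$. A direct row-space computation shows $\rng[eu]=\rng[e]u$ for every $u\in G$, whence $\Gla=\{\rng[e]u:u\in G\}$. Since $u\mapsto u^{-1}$ permutes $G$, this set equals $\{\rng[e]u^{-1}:u\in G\}=\Act4(G,\rng[e])$. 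Thus $\Gla$ is precisely the $G$-orbit of $\rng[e]$ under $\Act4$.

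The main obstacle is to show that this orbit is locally closed in $\gk$. My approach is to transport the local closedness already available in $M_n$ through the quotient map $\Ga_k:D_k\lro\gk$ of Lemma \ref{Lemma.4.3}, which is a continuous open surjection. For such a map one has $\Ga_k^{-1}(\overline{S})=\overline{\Ga_k^{-1}(S)}$ for every $S\subseteq\gk$, and the restriction of $\Ga_k$ to any saturated subset is again an open (hence quotient) map; consequently $\Gla$ is locally closed in $\gk$ if and only if its saturated preimage $\Ga_k^{-1}(\Gla)$ is locally closed in $D_k$. Using the characterisation of $\GL$ in $M_n$ by equality of ranges, one identifies $\Ga_k^{-1}(\Gla)=\gl\,a\,G$. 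It then remains to prove that $\gl a G$ is locally closed in $D_k$, which I would establish by a Zariski-closure argument modelled on the proof of Lemma \ref{Lemma.vii.3.3}, writing $\gl a G$ as the intersection of its Zariski-closure $\overline{\gl a G}$ with a suitable open set, hence as the intersection of a closed set and an open set of $M_n$.

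Once the orbit $\Gla=\Act4(G,\rng[e])$ is known to be locally closed, Theorem \ref{Thm2} applies verbatim and yields that $\Gla$ is a submanifold of $\gk$ (and, as a bonus, that $\Map{4,\rng[e]}:G/\Stb{4,\rng[e]}\lro\Gla$ is an isomorphism of manifolds). I expect local closedness to be the sole delicate point: the orbit identification is routine given Lemmas \ref{Lemma.vii.4.2} and \ref{Lemma.vii.3.1} and the row-space computation, whereas controlling how the non-algebraic group $G$ moves $\rng[e]$ inside $\gk$ is exactly where the underlying algebraic structure has to be brought in, precisely as in Lemma \ref{Lemma.vii.3.3}.
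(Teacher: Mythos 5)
Your proposal is correct in substance but takes a genuinely different route from the paper. The paper never establishes that $\Gla$ is locally closed in $\gk$ and never invokes Theorem \ref{Thm2} for this statement; instead, working with the same orbit picture (base point $W_0=\rng[a]$ rather than an idempotent), it uses the subimmersion machinery of \cite{Dieu:72}: the orbit map $\NAct{4}{W_0}$ is a subimmersion by (16.10.2), hence the injective factor map $\Map{4,W_0}$ through $G/\Stb{4,W_0}$ is a subimmersion by (16.10.4) and, being injective, an immersion by (16.8.8(iv)); it is then shown to be a homeomorphism onto $\Gla$ because $\NAct{4}{W_0}$ factors through $R_a^M$ as the composition of two continuous open surjections, namely $\NAct{2}{a}:G\lro R_a^M$ (Corollary \ref{Cor.vii.1}) and $\Ga_a^M|R_a^M:R_a^M\lro\Gla$ (Lemma \ref{Th.vii.4.1'}), so that $\Gla$ carries the quotient topology induced from $G$; finally (16.8.4) gives the submanifold conclusion. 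In other words, where you push the whole difficulty into local closedness of the orbit, the paper pushes it into openness of the orbit map, which it already has in hand from the earlier quotient-map lemmas, and thereby avoids any fresh algebraic geometry at this stage. Your route buys uniformity with Propositions \ref{Prop.vii.3.1} and \ref{Prop.vii.3.2} (everything reduced to Theorem \ref{Thm2}) and yields Corollary \ref{Cor.vii.4.1a} simultaneously; your topological reductions are sound (for a continuous open surjection one indeed has $\Ga_k^{-1}(\overline{S})=\overline{\Ga_k^{-1}(S)}$, hence the locally-closed transfer, and the identification $\Ga_k^{-1}(\Gla)=\gl\,a\,G$ is correct, as is $\rng[eu]=\rng[e]u$ under the paper's row-space convention). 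The one step you leave at sketch level deserves a warning: the proof of Lemma \ref{Lemma.vii.3.3} does not adapt verbatim, because its key input --- a Zariski-open $U_0\subseteq F=\overline{MaM}$ with $U_0\subseteq GaG$, taken from Putcha's Proposition 6.1 --- has no ready-made analogue for $\gl\,a\,G$. To supply it, observe that $\gl\,a\,G$ is the orbit of $a$ under the \emph{algebraic} group $\gl\times G'$, with $G'$ as in Lemma \ref{Lemma.vii.2.1} acting by $((u,b'),x)\mapsto uxb^{-1}$ (this is exactly how one circumvents the non-algebraicity of $G$ that you correctly identified as the delicate point); the standard fact that orbits of algebraic group actions are Zariski-open in their closures then provides $U_0$, and the translation invariance $uF'=F'$ for $u\in\gl$ and $F'v=F'$ for $v\in G$, with $F'=\overline{M_naM}$ the Zariski closure, carries over verbatim from Lemma \ref{Lemma.vii.3.3}. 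With that detail supplied, your argument is complete and constitutes a legitimate alternative proof.
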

\begin{proof}
 Let $W_0=\rng[a]$. Then  $\Stb{4,W_0}$
is an analytic subgroup of $G$. Hence the orbit manifold $G/\Stb{4,W_0}$
exists (see (16.10.6) in \cite{Dieu:72}).

\medskip
\[
\begin{diagram}
\node{G}\arrow{e,t}{\NAct{2}{a}} \arrow{s,l}{\pi} \arrow{se,t}{\NAct{4}{W_0}} \node{R_a^M} 
\arrow{s,r}{\Gamma_a^M} \node{} \\
\node{G/\Stb{4,W_0}}\arrow{e,b}{\Map{4,W_0}}\node{\Gla} \arrow{e,b}{\subseteq}\node{\mathsf{G}_{k}}
\end{diagram}
\]
\medskip

Let $\pi  : G \rightarrow  G/\Stb{4,W_0}$ be defined by 
$ u\mapsto u\Stb{4,W_0}$. Then we have 
$\NAct{4}{W_0}=\pi\circ \Map{4,W_0}$.

Since $\Act4$ is analytic action of the analytic group $G$ on the
analytic manifold \gk, the map $\NAct{4}{W_0}$ is a subimmersion
(see (16.10.2) in \cite{Dieu:72}). Therefore, the map $\Map{4,W_0}$ is
also a subimmersion (see (16.10.4) in \cite{Dieu:72}). Since
$\Map{4,W_0}$ is injective also, it is an immersion (see (16.8.8(iv)) in
\cite{Dieu:72}).

Now $\Map{4,W_0}(G/\Stb{4,W_0})=\Gla$.
Hence, to show that $\Gla$ is a submanifold of \gk\ it is enough to show
that (see (16.8.4) in \cite{Dieu:72}) the map $\Map{4,W_0}$ is a
homeomorphism from $G/\Stb{4,W_0}$ onto the subspace $\Gla$ of \gk.

To prove that $\Map{4,W_0}$ is a homeomorphism, 
we note that, we have
$\NAct{4}{W_0}=\NAct{2}{a}\circ\Ga_a^M$.

By Lemma \ref{Lemma.vii.4.1}, $\Ga_a^M$ is a continuous open surjection
from $G$ onto $R_a^M$.
By Corollary \ref{Cor.vii.1} the map  $\NAct{2}{a}$
is  a continuous  open surjection from $R_a^M$ onto $\Gla$.
Therefore the map $\NAct{4}{W_0}$ is also a continuous open surjection
from $G$ onto $\Gla$.
 Hence the topology on $\Gla$ is the quotient topology
induced by the map $\NAct{4}{W_0}$. The quotient space induced by this map is
$G/\Stb{4,W_0}$.
Hence $\Map{4,W_0}:G/\Stb{4,W_0}\lro \Gla$, which is the quotient
map induced by the map $\NAct{4}{W_0}$, must be a homeomorphism.

The proof of the theorem is now complete.
\end{proof}

From Proposition \ref{Prop.4.7} we have the following corollary (see (16.8.4) in
\cite{Dieu:72}).
\begin{cor}\label{Cor.vii.4.1a}
Let $e\in E(M)$. Then  $C^l_G(e)$  is an
analytic subgroup of $G$ and $\Map{4,\rng}$ 
is an isomorphism of manifolds.
\end{cor}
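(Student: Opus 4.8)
The plan is to obtain both assertions by specialising the already-established Theorem~\ref{Th.vii.4.1a} to the idempotent $a=e$ and invoking the stabiliser identification of Proposition~\ref{Prop.4.7}; essentially all the analytic work has already been carried out, so the corollary amounts to assembling existing pieces.

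For the first assertion I would combine two facts that are already in hand. The paragraph preceding Proposition~\ref{Prop.4.7} records, via Theorem~\ref{Thm1} applied to the analytic action $\Act4$, that $\Stb{4,W}$ is an analytic subgroup of $G$ for every $W\in\gk$. Proposition~\ref{Prop.4.7} then identifies $\Stb{4,\rng[e]}$ with the left centraliser $C^l_G(e)$. Taking $W=\rng[e]$ yields at once that $C^l_G(e)=\Stb{4,\rng[e]}$ is an analytic subgroup of $G$.

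For the second assertion I would apply Theorem~\ref{Th.vii.4.1a} with $a=e$. Since $e\in E(M)\subseteq M$ has some rank $k$, the hypotheses are met and $W_0=\rng[e]$. The proof of that theorem already produces the two properties of $\Map{4,W_0}$ that are needed: the subimmersion argument there shows that $\Map{4,W_0}$ is an injective immersion of $G/\Stb{4,W_0}$ into $\gk$, and its closing paragraph shows that $\Map{4,W_0}$ is a homeomorphism of $G/\Stb{4,W_0}$ onto the subspace $\Gla$. With these recorded, the conclusion is a single application of (16.8.4) in \cite{Dieu:72}: an injective immersion that is simultaneously a homeomorphism onto its image is an isomorphism of manifolds onto that image, the image being the submanifold $\Gla$ of $\gk$ exhibited in Theorem~\ref{Th.vii.4.1a}. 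Hence $\Map{4,\rng[e]}$ is an isomorphism of manifolds.

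I do not expect a genuine obstacle here, since the hard analytic content was expended in Theorem~\ref{Th.vii.4.1a}. The only point demanding a moment's care is the bookkeeping that the codomain of $\Map{4,\rng[e]}$, namely the orbit $\Act4(G,\rng[e])$, coincides with $\Gla$ in the case $a=e$; this is immediate because $\Map{4,W_0}(G/\Stb{4,W_0})=\Gla$ was verified inside the proof of Theorem~\ref{Th.vii.4.1a}.
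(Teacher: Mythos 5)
Your proposal is correct and follows exactly the route the paper intends: the first assertion is Proposition~\ref{Prop.4.7} combined with the remark (via Theorem~\ref{Thm1}) that every stabiliser $\Stb{4,W}$ is an analytic subgroup of $G$, and the second is the specialisation $a=e$ of the proof of Theorem~\ref{Th.vii.4.1a}, where $\Map{4,\rng}$ was shown to be an injective immersion and a homeomorphism onto $\Gla$, so that (16.8.4) of \cite{Dieu:72} yields the manifold isomorphism. The paper gives no separate argument beyond this citation, so your reconstruction matches its proof essentially verbatim.
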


We have corresponding results involving $\GL$-classes. In this case we
consider the  action 
$$
\Act5:G\times \gk[n-k]\rightarrow \gk[n-k],
\quad 
(u,N)\mapsto Nu^{-1}.
$$
%
\section{The Space of Idempotents}

We begin with the following lemma which characterizes the restrictions
of Green's relations to
the set of idempotents in $M$
(see Corollary 6.8 in \cite{Putc:86}). Note that the restrictions of $\GL$ and
$\GR$ to $E(M)$ are the biorder relations $L$ and $R$ in $E(M)$.
\begin{lemma}\label{Lemma.vii.5.1}
Let $e,f\in E(M)$. Then we have:
\begin{enumerate}
\item
$e\,\GD^M\, f\,\Leftrightarrow\, f=ueu^{-1}\quad\text{for some}\quad u\in
G$. 
\item
$e\,\GL^M\, f\,\Leftrightarrow\, f=ueu^{-1}=ue\quad\text{for some}\quad
u\in G$.
\item
$e\,\GR^M\, f\,\Leftrightarrow\, f=ueu^{-1}=eu^{-1} \quad\text{for
some}\quad u\in G$.
\end{enumerate}
\end{lemma}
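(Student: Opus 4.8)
The plan is to prove each of the three equivalences by separating the two implications and reducing everything to Lemma~\ref{Lemma.vii.3.1} together with the biorder description of the restricted relations: for $e,f\in E(M)$ one has $e\,\GL^M\,f$ iff $ef=e$ and $fe=f$, and $e\,\GR^M\,f$ iff $fe=e$ and $ef=f$ (these follow directly from $L^M_a=Ga$, $R^M_a=aG$ in Lemma~\ref{Lemma.vii.3.1} and idempotency). The reverse implications are then immediate. If $f=ueu^{-1}$ with $u\in G$, then $f=(ue)u^{-1}\in GeG=D^M_e$, giving $e\,\GD^M\,f$ for (1); if in addition $f=ue$, then $f\in Ge=L^M_e$, giving (2); and if $f=eu^{-1}\in eG=R^M_e$, then $e\,\GR^M\,f$, giving (3).

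For the forward implications in (2) and (3) I would exhibit an explicit conjugating element in $\gl$. Suppose $e\,\GL^M\,f$, so $ef=e$ and $fe=f$, and put $u=1-e+f$. Using $e^2=e$, $f^2=f$, $ef=e$, $fe=f$ one checks that $u$ is invertible in $\gl$ with $u^{-1}=1-f+e$ (indeed $(1-e+f)(1-f+e)=1$), that $ue=e-e+fe=f$, and that $ueu^{-1}=fu^{-1}=f-f+fe=f$; hence $f=ue=ueu^{-1}$. Symmetrically, if $e\,\GR^M\,f$ then $fe=e$ and $ef=f$, and the element $u=1-f+e$ is invertible in $\gl$ with $ue=e$, $eu^{-1}=f$ and $ueu^{-1}=f$, which is the form required in (3).

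The forward implication in (1) reduces to (3). If $e\,\GD^M\,f$ then, by Lemma~\ref{Lemma.vii.3.1}, $f=aeb$ for some $a,b\in G$. The conjugate $e'=aea^{-1}$ is again idempotent, and $f=aeb=(aea^{-1})(ab)=e'(ab)\in e'G=R^M_{e'}$, so $f\,\GR^M\,e'$. Applying the case already proved in (3) to the pair $e',f$ yields $u_2\in G$ with $f=u_2e'u_2^{-1}$, whence $f=(u_2a)e(u_2a)^{-1}=ueu^{-1}$ with $u=u_2a\in G$. (Note that the naive attempt to interpose an idempotent $g$ with $e\,\GL^M\,g\,\GR^M\,f$ fails in general, since the relevant corner $\GH$-class need not contain an idempotent—already for $E_{11},E_{22}$ in $M_2$—so this conjugation route through $e'$ is the correct one.)

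The main obstacle is that the conjugators produced above lie a priori only in the ambient group $\gl$, whereas the statement demands $u\in G=M\cap\gl$ (Lemma~\ref{Lemma.vii.2.1}). Because $M$ is merely a submonoid of $M_n$ and is not closed under the additive operations used in the expression $1-e+f$, membership in $M$ cannot be read off from the formula. I would close this gap using the regularity of $M$ and the structure theory of regular algebraic monoids—specifically Corollary~6.8 of \cite{Putc:86}, which furnishes a conjugating unit already inside $M$. In this way the explicit $\gl$-computation pins down the conjugation relation, and Putcha's result upgrades the conjugator to an element of $G$, completing all three forward implications.
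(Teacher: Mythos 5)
Your proposal is, in the end, the paper's proof: the paper offers no argument whatsoever for this lemma, stating it with only the parenthetical citation ``(see Corollary 6.8 in \cite{Putc:86})'', and that corollary of Putcha is essentially verbatim the statement being proved, with the conjugating element already in $G$. Since your own argument closes its one acknowledged gap by invoking the very same Corollary 6.8, the decisive step of your proof and of the paper's coincide, and everything you do beforehand, though correct, becomes logically idle once the corollary is granted. In particular, the framing that your explicit computation ``pins down the conjugation relation'' while Putcha's result merely ``upgrades the conjugator'' overstates what the computation achieves: the element $u=1-e+f$ lives only in $\gl$ and nothing transports it into $G=M\cap\gl$ (as you rightly note, $M$ is not closed under the additive operations involved), so Putcha's corollary does not repair your $u$ but independently supplies all three equivalences --- which is the whole lemma. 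That said, the portions you prove are sound and have expository value beyond the paper's bare citation: the reverse implications do follow at once from Lemma \ref{Lemma.vii.3.1}; the computations with $u=1-e+f$ (inverse $1-f+e$, $ue=f$, $ueu^{-1}=f$, and the symmetric case for $\GR^M$) are all correct and establish the ambient $M_n$ version of the forward implications; the reduction of statement (1) to statement (3) via $e'=aea^{-1}$ is valid; and your parenthetical warning is apt, since for $E_{11},E_{22}\in M_2$ neither corner $\GH$-class contains an idempotent (the required idempotent would have equal range and null space). Your diagnosis of where the real difficulty sits is also accurate: conjugacy under $G$ of $\GD^M$-related idempotents is a genuine structure theorem about regular \emph{irreducible} monoids and cannot be obtained by elementary matrix identities --- it already fails, for instance, in the monoid obtained by adjoining an identity to a two-element left-zero semigroup, where $e\,\mathscr{L}\,f$ but the only unit is $1$. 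So the honest verdict is: no error, but the elementary half of your argument is scaffolding around a citation that, here as in the paper, carries the entire load.
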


Lemma \ref{Lemma.vii.5.1} has the following consequence. 
\begin{prop}
Any two $\GL^M$- [$\GR^M$-] classes contained
in the same $\GD^M$-class of $M$ are
homeomorphic under a  conjugation.
Moreover, this homeomorphism  preserves the idempotents. 
\end{prop}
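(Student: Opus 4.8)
The plan is to realize the desired homeomorphism as the restriction of an inner automorphism of $M_n$, exploiting the fact that conjugation by a unit permutes $G$. First I would reduce everything to idempotents. Since $M$ is regular, every $\GL^M$-class (respectively every $\GR^M$-class) contains an idempotent, so given two $\GL^M$-classes $L,L'$ lying in a common $\GD^M$-class I may choose idempotents $e\in L$ and $f\in L'$. As $e,f$ lie in the same $\GD^M$-class we have $e\GD^M f$, and Lemma \ref{Lemma.vii.5.1}(1) then supplies a unit $u\in G$ with $f=ueu^{-1}$. This is exactly the point where the conjugation (rather than a mere translation, which is all one expects for $\GL$-classes in general) enters: Lemma \ref{Lemma.vii.5.1} produces a single unit that conjugates the chosen idempotents.

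Next I would examine the conjugation map $\gamma_u : M_n\to M_n$, $x\mapsto uxu^{-1}$. Writing $L=L_e^M=Ge$ via Lemma \ref{Lemma.vii.3.1}(1), the key computation is
\[
u(Ge)u^{-1}=(uGu^{-1})(ueu^{-1})=Gf=L_f^M,
\]
where the middle step inserts $u^{-1}u$, and the last step uses $uGu^{-1}=G$ (conjugation by $u$ is an automorphism of the group $G$) together with $ueu^{-1}=f$. Thus $\gamma_u$ carries $L$ onto $L'$; by the symmetric computation with $u^{-1}$ (noting $e=u^{-1}fu$), $\gamma_{u^{-1}}$ carries $L'$ onto $L$, and since $\gamma_{u^{-1}}\circ\gamma_u$ is the identity, $\gamma_u|_L$ is a bijection of $L$ onto $L'$. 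Because $\gamma_u$ and $\gamma_{u^{-1}}$ are polynomial, hence continuous, maps of $M_n$ in the euclidean topology, the restriction $\gamma_u|_L$ is a homeomorphism of the subspace $L$ onto the subspace $L'$.

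Finally, idempotent-preservation would be immediate: if $x^2=x$ then $(uxu^{-1})^2=ux^2u^{-1}=uxu^{-1}$, so $\gamma_u$ maps $E(L)$ into $E(L')$ and $\gamma_{u^{-1}}$ maps $E(L')$ into $E(L)$, yielding a bijection between the two idempotent sets. The $\GR^M$-case is entirely parallel: I would take idempotents $e\in L$, $f\in L'$ in the two $\GR^M$-classes, obtain the same $u$ with $f=ueu^{-1}$ from $e\GD^M f$, and compute $u(eG)u^{-1}=(ueu^{-1})(uGu^{-1})=fG=R_f^M$ using Lemma \ref{Lemma.vii.3.1}(2).

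I do not anticipate a serious obstacle. The only step requiring any care is the verification that conjugation sends one class exactly onto the other, and this rests on two structural inputs already available, namely $L_e^M=Ge$ (Lemma \ref{Lemma.vii.3.1}) and $uGu^{-1}=G$, together with the existence of the idempotents $e,f$ guaranteed by regularity of $M$. Everything else — continuity, invertibility, and the idempotent identity — is formal.
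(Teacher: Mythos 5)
Your proposal is correct and follows essentially the same route as the paper: both pick idempotents in the two classes, invoke Lemma \ref{Lemma.vii.5.1} to obtain a single unit $u\in G$ conjugating one idempotent to the other, and then use Lemma \ref{Lemma.vii.3.1} to show $x\mapsto uxu^{-1}$ carries one class onto the other, with continuity and idempotent-preservation being formal. Your set-level computation $u(Ge)u^{-1}=(uGu^{-1})(ueu^{-1})=Gf$ is just a compressed version of the paper's elementwise argument (writing $x=vf$ and checking $uxu^{-1}=(uvu^{-1})g$), so there is no substantive difference.
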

\begin{proof}
Let $b,c\in D_a^M$ and let  $f\in E(L_b^M)$ and $g \in E(L_c^M)$. 
Since $f\GD^M g$, by Lemma \ref{Lemma.vii.5.1},  we can find $u\in G$ such that $g=ufu^{-1}$.

Now, let $x \in L_b^M$ and $y= uxu^{-1}$. Since 
$x\GL^M f$,  by Lemma \ref{Lemma.vii.3.1},  we can find $v\in
G $  such that $x=vf$. Now we have
$$
y=uxu^{-1}=uvfu^{-1}=uvu^{-1}(ufu^{-1})=(uvu^{-1})g.
$$
This  shows that $y\GL^M g$ and so $y\GL^M c$. Conversely, if $y\in L_c^M$ then it
can be shown that $u^{-1}yu\in L_b^M$. Therefore the map $x\mapsto uxu^{-1}$ is a
bijection from $L_b^M$ onto $L_c^M$. This map is clearly a homeomorphism.
It is also
a conjugation. Thus $L_b^M$ and $L_c^M$ are homeomorphic under a conjugation. This homeomorphism obviously preserves idempotents. 

The proof of the result regarding $\GR^M$-classes is similar.
\end{proof}
Let $e\in E(M)$. To discuss the topology of $E(D_e^M)$, we consider the action $\Act6$ of $G$ on $E(D_e^M)$ defined by 
\begin{equation*}\label{Eq.vii.Act6}
\Act6: G\times E(D_e)\lro E(D_e),\quad
(u,f)\mapsto ufu^{-1}.
\end{equation*}
As in the proof of Proposition \ref{Prop.vii.3.1}, we can see that
this action is  analytic and, by Lemma \ref{Lemma.vii.5.1}, we have
$E(D^M_e)= \Act6(G,e).$ 
We also have
$$
\Stb{6,e} = \{u\in G: ueu^{-1}=e\}  = \{u\in G :ue=eu\}.
$$
In the notations and
terminology of \cite{Putc:86} (see p.48 \cite{Putc:86}), 
$\Stb{6,e}$ is the {\em centraliser} $C_G(e)$ of $e$ in $G$.
\begin{prop}
Let $e\in E(M)$. Then 
$E(D^M_e)$ is a  submanifold of $E(D_e)$. Moreover, 
the map $\Map{6,e}$
is an isomorphism of manifolds.
\end{prop}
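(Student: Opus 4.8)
The plan is to imitate exactly the orbit arguments of Propositions \ref{Prop.vii.3.1} and \ref{Prop.vii.3.2}. All the ingredients are already in place: the preceding discussion shows that $\Act6$ is an analytic action of $G$ on the manifold $E(D_e)$ (which is a manifold by the $M=M_n$ case treated in \cite{kris:00}), that its orbit through $e$ is $\Act6(G,e)=E(D_e^M)$, and that its stabiliser is $\Stb{6,e}=C_G(e)$. Consequently both conclusions of the proposition---that $E(D_e^M)$ is a submanifold of $E(D_e)$ and that $\Map{6,e}$ is an isomorphism of manifolds---will drop out simultaneously from Theorem \ref{Thm2}, the moment I can show that the orbit $E(D_e^M)$ is locally closed in $E(D_e)$. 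The entire content of the proof thus lies in this one topological verification.

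To carry it out I would first record the set-theoretic identity
\[
E(D_e^M)=E(D_e)\cap D_e^M ,
\]
valid because $D_e^M\subseteq D_e$: an element is an idempotent of $D_e^M$ precisely when it is an idempotent of $D_e$ that happens to lie in $D_e^M$. I would then appeal to Lemma \ref{Lemma.vii.3.3}, by which $D_e^M$ is locally closed in $M_n$; writing $D_e^M=U'\cap F'$ with $U'$ open and $F'$ closed in $M_n$ and intersecting with $E(D_e)$ yields
\[
E(D_e^M)=\bigl(E(D_e)\cap U'\bigr)\cap\bigl(E(D_e)\cap F'\bigr).
\]
This exhibits $E(D_e^M)$ as the intersection of a set open in $E(D_e)$ with a set closed in $E(D_e)$, whence it is locally closed in $E(D_e)$ by (12.2.3) \cite{Dieu:70}.

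With local closedness established, I would invoke Theorem \ref{Thm2} for the analytic action $\Act6$ of $G$ on $E(D_e)$ at the point $e$. It gives at once that the orbit $\Act6(G,e)=E(D_e^M)$ is a submanifold of $E(D_e)$ and that the canonical map $\Map{6,e}:G/\Stb{6,e}\to E(D_e^M)$ is an isomorphism of manifolds, which are the two assertions of the proposition.

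I expect the only genuine obstacle to be the passage of local closedness from the large ambient space $M_n$ down to the smaller manifold $E(D_e)$; this is precisely where the identity $E(D_e^M)=E(D_e)\cap D_e^M$ and Lemma \ref{Lemma.vii.3.3} do the work. Once that intersection representation is in hand, the appeal to Theorem \ref{Thm2} is entirely formal, exactly as in the earlier orbit arguments of this paper.
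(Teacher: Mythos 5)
Your proof is correct, but it reaches the crucial local-closedness step by a genuinely different route from the paper. You intersect the locally closed set $D_e^M$ (Lemma \ref{Lemma.vii.3.3}) with $E(D_e)$, using the identity $E(D_e^M)=E(D_e)\cap D_e^M$ (valid since $D_e^M\subseteq D_e$); this is a purely topological argument that recycles machinery already proved in the paper. The paper instead invokes $\GD^M=\GJ^M$ (Lemma \ref{Lemma.vii.4.1}) together with Proposition 5.8 of \cite{Putc:86} to conclude that $E(D_e^M)$ is an irreducible algebraic subset of $M$, hence Zariski-closed, hence closed in $M_n$ in the euclidean topology, and therefore \emph{closed} --- not merely locally closed --- in $E(D_e)$. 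The algebro-geometric route thus proves something strictly stronger, and that extra strength is used immediately after the proposition, where the paper observes that the sets $E(D_e^M)$ form a finite family of mutually disjoint closed subsets of $E(M)$; your proof does not deliver this closedness. On the other hand, your route is more self-contained: it needs only Lemma \ref{Lemma.vii.3.3} and no further appeal to \cite{Putc:86}, and local closedness is all that Theorem \ref{Thm2} requires. From that point on the two proofs coincide: both feed the analytic action $\Act6$ (analyticity noted before the proposition, as in Proposition \ref{Prop.vii.3.1}), the orbit identification $E(D_e^M)=\Act6(G,e)$ from Lemma \ref{Lemma.vii.5.1}, and the stabiliser $\Stb{6,e}=C_G(e)$ into Theorem \ref{Thm2} to obtain both the submanifold conclusion and the isomorphism $\Map{6,e}$.
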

\begin{proof}
By Lemma \ref{Lemma.vii.4.1} we have  $\GD^M=\GJ^M$. Hence 
Proposition 5.8 in \cite{Putc:86} implies that
$E(D^M_e)$ is an irreducible algebraic subset of $M$
and hence it is itself an algebraic
set. Therefore $E(D^M_e)$ is a closed set in $M_n(\fldk)$.
Since $E(D_e)$ is a closed set in $M_n(\fldk)$ (see \cite{kris:00})  and since $E(D_e^M)$ is
also a closed set in $M_n(\fldk)$, $E(D_e^M)$ is a closed set in
$E(D_e)$. Hence $E(D_e^M)$ is a locally closed set in $E(D_e)$.

Since $E(D^M_e)=G\centerdot e$ under the action 
$\Act6$ of $G$ on $E(D_e)$, 
Theorem \ref{Thm2} implies that $E(D_e^M)$ is a submanifold
of $E(D_e)$.

Since $G$ is an analytic group and since  $\Act6$ is an
analytic action on the manifold $E(D_e)$, the stabiliser $C_G(e)$ is
an analytic subgroup of $G$. Theorem \ref{Thm2} also implies that the map $\Map{6,e}$ is an isomorphism of manifolds.
\end{proof}
Since $E(D^M_a)$ is a closed set in $M$, it is a closed
set in $E(M)$ also. The family  of $\GD^M$-classes in $M$ is a finite family (see Theorem 5.10 in \cite{Putc:86}). So the
family $\{E(D^M_e)\,:\, e\in E(M)\}$, the union of whose members is
$E(M)$,
contains only a finite number of mutually disjoint,  closed
subsets of $E(M)$. 

%
Let $e,f\in E(M)$ and $\Rank(e)=\Rank(f)$. Then $E(D_e)=E(D_f)$. But $E(D^M_e)$
and $E(D^M_f)$ need not be homeomorphic. They need not even be manifolds of the
same dimension. For example, consider the monoid
$$
M=\left\{\begin{bmatrix}
            \alpha & 0 \\ 0 & A
         \end{bmatrix}
         :\alpha\in \fldk,A\in M_2(\fldk)\right\}.
$$
The group of units of this monoid is
$$
G=\left\{\begin{bmatrix}
            \alpha&0\\0&A
         \end{bmatrix}
         :\alpha\det(A)\ne 0\right\}.
$$
Let $e=\left[\begin{smallmatrix}
              1 & 0 \\ 0 & O'
             \end{smallmatrix}\right]$ where
$O'\in M_2(\fldk)$ is the zero-matrix and
$f=\left[\begin{smallmatrix}
             0 & 0 \\ 0 & f'
         \end{smallmatrix}\right]$ where
$f'=\left[\begin{smallmatrix}
             1 & 0 \\ 0 & 0
          \end{smallmatrix}\right]\in M_2(\fldk)$.
Then $e,f\in E(M)$ and $\Rank(e)=\Rank(f)=2$. Now we can easily verify that
$$
E(D^M_e)=\{e\}\quad\text{and}\quad
E(D^M_f)=\left\{\begin{bmatrix}
                      0&0\\0&g'
                \end{bmatrix}:g'\in
                       E(D_{f'})\subseteq M_2(\fldk)\right\}.
$$
Obviously $\dim E(D^M_e)=0$. Since $\Rank(f')=1$, by Theorem 5.3 \cite{kris:00}, we have $ \dim E(D_f^M)= E(D_{f'})=2$.
%

It is known that if $e\in E(M)$ and $\Rank(e)=k$, then
$E(L_e)$ and $E(R_e)$ are affine spaces of dimension $k(n-k)$ (Proposition 3.1\cite{kris:00}) . We show
here that, if $e\in M$, then $E(L_e^M)$ and $E(R_e^M)$ are submanifolds
of these affine spaces. We begin with the following lemma.
\begin{lemma}\label{LemmaELeM}
If $e,f\in E(M)$ and $e\GL^M f$ then $E(L_e^M)=\{ufu^{-1}:u\in C_G^l(e)\}$.
\end{lemma}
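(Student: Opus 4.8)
The plan is to prove the stated set equality by establishing the two inclusions separately, the entire argument resting on one equivariance remark together with the characterisations already proved. Since $e\GL^M f$, Lemma \ref{Lemma.vii.4.1} gives $\rng[f]=\rng[e]$ and $L_e^M=L_f^M$, so that $E(L_e^M)=E(L_f^M)$; this lets me pass freely between $e$ and $f$ in range computations. The remark I would record at the outset is that the range map intertwines conjugation on $M$ with the action $\Act4$ on $\gk$: for every $u\in G$ and $x\in M$ one has
\[
\rng[uxu^{-1}]=\rng[x]u^{-1}=\Act4(u,\rng[x]),
\]
because left multiplication by the invertible $u$ leaves the row space unchanged while right multiplication by $u^{-1}$ carries $\rng[x]$ to $\rng[x]u^{-1}$. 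By Proposition \ref{Prop.4.7} we have $\Cle=\Stb{4,\rng[e]}$, so this remark says exactly that $u\in\Cle$ if and only if $\rng[e]u^{-1}=\rng[e]$.

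For the inclusion $\supseteq$, I would take $u\in\Cle$ and note that $ufu^{-1}\in M$ is idempotent, being a conjugate of the idempotent $f$. The equivariance remark then gives $\rng[ufu^{-1}]=\rng[f]u^{-1}=\rng[e]u^{-1}=\rng[e]$, the last equality because $u$ stabilises $\rng[e]$. Since $\GL^M$ is determined by the range (Lemma \ref{Lemma.vii.4.1}), this yields $ufu^{-1}\GL^M e$, whence $ufu^{-1}\in E(L_e^M)$.

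For the reverse inclusion $\subseteq$, I would take an idempotent $g\in E(L_e^M)$. Then $g\GL^M e\GL^M f$, so Lemma \ref{Lemma.vii.5.1}(2) supplies some $v\in G$ with $g=vfv^{-1}$. Applying the equivariance remark once more gives
\[
\rng[e]=\rng[g]=\rng[f]v^{-1}=\rng[e]v^{-1},
\]
which forces $v\in\Stb{4,\rng[e]}=\Cle$. Thus $g=vfv^{-1}$ with $v\in\Cle$, as required.

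The one step that genuinely needs care — and the place I expect the apparent difficulty to lie — is this reverse inclusion: Lemma \ref{Lemma.vii.5.1} only produces a conjugator $v$ in the whole group $G$, and one must verify that it in fact lies in the smaller subgroup $\Cle$. This is precisely where the equivariance of the range map does the work, converting the range equality $\rng[g]=\rng[e]$ into the stabiliser condition defining $\Cle$. Everything else — the idempotency and $M$-membership of the conjugates, and the passage between the two descriptions of $\Cle$ — is routine given Proposition \ref{Prop.4.7} and the regularity of $M$.
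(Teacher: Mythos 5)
Your proof is correct, and it reaches the statement by a genuinely different mechanism than the paper's, even though the two share the same skeleton (two inclusions, with Lemma \ref{Lemma.vii.5.1}(2) producing the conjugator in $G$ for the inclusion $\subseteq$). The paper, after using Lemma \ref{Lemma.vii.4.1} and Proposition \ref{Prop.4.7} only to record $\Cle=C_G^l(f)$, works entirely on the algebraic side of Proposition \ref{Prop.4.7}: for $u\in C_G^l(f)$ it derives, from $fuf=fu$ and (since $C_G^l(f)$ is a group) $fu^{-1}f=fu^{-1}$, the biorder identities $fg=f$ and $gf=g$ for $g=ufu^{-1}$, which give $g\,\GL^M\,f$; and for $\subseteq$ it verifies $u\in C_G^l(f)$ by extracting $fuf=fu$ from $fg=f$. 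You instead work on the geometric side, transporting everything to $\gk$ via the equivariance $\rng[uxu^{-1}]=\rng[x]u^{-1}$ and the stabiliser description $\Cle=\Stb{4,\rng[e]}$ from Proposition \ref{Prop.4.7}, so that both inclusions collapse to one-line range computations; your equivariance identity is valid under the paper's row-space convention for $\rng[x]$ (the map $q$ sends a matrix to its row span, and the paper's own proof of Proposition \ref{Prop.4.7} uses $\rng[eu]=\rng[e]u$ in the same way), so there is no hidden convention error. What each approach buys: your route is shorter, avoids the idempotent calculus and even the explicit identity $C_G^l(e)=C_G^l(f)$, and makes visible the $G$-equivariance of the range map that also underlies Theorem \ref{Th.vii.4.1a}; the paper's route is purely multiplicative, staying inside the monoid structure and using the range only through what Proposition \ref{Prop.4.7} already encodes. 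You also correctly identify the crux common to both arguments, namely upgrading the conjugator $v\in G$ supplied by Lemma \ref{Lemma.vii.5.1} to an element of $\Cle$.
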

\begin{proof}
Since $e\GL^M f$, by Lemma \ref{Lemma.vii.4.1},
$\rng=\rng[f]$ and so by  Proposition \ref{Prop.4.7} we have $C_G^l(e)=C_G^l(f)$. 

Let $u\in C_G^l(e)$ and $g=ufu^{-1}$. We show that $g\in E(L_e^M)$. $g$ is clearly in $E(M)$. Since $C_G^l(e)=C_G^l(f)$ we have $fuf=fu$. Since $C_G^l(f)$ is a group, $u^{-1}\in C_G^l(f)$ and so $fu^{-1}f=fu^{-1}$. From these we get $f(ufu^{-1}) =f$ and $(ufu^{-1}) f = ufu^{-1}$, that is, $fg=f$ and $gf=g$.  Therefore $g\GL^M f$. 

Let $g$ be an arbitrary element in $E(L_e^M)$. Then $g\GL^M f$, and by Lemma \ref{Lemma.vii.5.1}, we can find $u\in G$ such
that $g=ufu^{-1}=uf$. For this $u$, we have $f(ufu^{-1})=fg = f$ and so
$fuf=fu$ implying that $u\in C_G^l(f)=C_G^l(e)$. Thus $g=ufu^{-1}$ for some $u\in C_G^l(e)$.
\end{proof}
We next consider the following action of $C_G^l(e)$ on $E(L_e)$.
\begin{equation*}\label{Eq.vii.Act7}
\Act7:C_G^l(e)\times E(L_e)\lro E(L_e)\, ,\quad (u,f)\mapsto ufu^{-1}.
\end{equation*}
\begin{lemma}\label{LemmaA7}
Let $e\in E(M)$. 
\begin{enumerate}
\item
The action $\Act7$ is analytic.
\item
$\Stb{7,e}=C_G(e)$.
\item
$\Act7(C_G^l(e),e) = E(L_e^M)$.
\end{enumerate}
\end{lemma}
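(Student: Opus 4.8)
The plan is to dispatch the three claims in turn, leaning on the machinery already assembled; only the first requires genuine care, and within it the decisive step is the well-definedness of $\Act7$. Fix $u \in \Cle$ and $f \in E(L_e)$. The conjugate $ufu^{-1}$ is manifestly idempotent, so the only thing to check is that it remains in the $\GL$-class $L_e$ of $M_n$, i.e.\ that $\rng[ufu^{-1}] = \rng[e]$. A direct computation of the range (as a row space)---left multiplication by the unit $u$ leaves the range unchanged, while the trailing factor $u^{-1}$ translates it exactly as in the action $\Act4$---gives $\rng[ufu^{-1}] = \rng[f]\,u^{-1} = \rng[e]\,u^{-1}$, the last equality holding because $f \in E(L_e)$ forces $\rng[f] = \rng[e]$. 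By Proposition \ref{Prop.4.7} we have $\Cle = \Stb{4,\rng[e]}$, and membership in $\Stb{4,\rng[e]}$ says precisely that $\rng[e]\,u^{-1} = \rng[e]$. Hence $\rng[ufu^{-1}] = \rng[e]$, so $\Act7$ genuinely maps into $E(L_e)$; that it is a left action is the usual one-line check. For analyticity I would proceed exactly as in the proof of Proposition \ref{Prop.vii.3.1}: the conjugation map $(u,x) \mapsto uxu^{-1}$ is analytic on $\gl \times M_n$ (multiplication is polynomial and inversion is analytic on $\gl$), and since $\Cle$ is an analytic subgroup of $G$ (Corollary \ref{Cor.vii.4.1a}) and $E(L_e)$ is an affine space, hence a submanifold of $M_n$ (Proposition 3.1 \cite{kris:00}), the restriction of this map to the submanifold $\Cle \times E(L_e)$ is analytic by (16.8.3.4) \cite{Dieu:72}.

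For (2) I would compute the stabiliser by hand: $u \in \Stb{7,e}$ iff $u \in \Cle$ and $ueu^{-1} = e$, the latter amounting to $ue = eu$, so that $\Stb{7,e} = \{u \in \Cle : ue = eu\}$. Since $ue = eu$ already yields $eue = e^2 u = eu$, every element of $C_G(e) = \{u \in G : ue = eu\}$ lies in $\Cle$; thus the side condition $u \in \Cle$ is automatic and $\Stb{7,e} = C_G(e)$. Claim (3) is then immediate from Lemma \ref{LemmaELeM}: taking $f = e$ (which trivially satisfies $e \GL^M e$) gives $E(L_e^M) = \{ueu^{-1} : u \in \Cle\}$, and the right-hand side is by definition the orbit $\Act7(\Cle, e)$.

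The one place where something could go wrong is the well-definedness in (1), and this is exactly where the choice of group matters: it is the left-centraliser condition $\rng[e]\,u^{-1} = \rng[e]$, and \emph{not} mere membership in $G$, that confines $ufu^{-1}$ to $E(L_e)$. Everything else is bookkeeping over results proved earlier.
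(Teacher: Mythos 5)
Your proof is correct and takes essentially the same route as the paper: part (2) via the observation that $ue=eu$ forces $eue=eu$, so $C_G(e)\subseteq \Cle$ and the stabiliser is exactly $C_G(e)$, and part (3) directly from Lemma \ref{LemmaELeM} with $f=e$. For part (1) the paper simply declares analyticity obvious; your explicit check that $\Act7$ actually lands in $E(L_e)$, via $\rng[ufu^{-1}]=\rng u^{-1}=\rng$ for $u\in \Stb{4,\rng}=\Cle$ (Proposition \ref{Prop.4.7}), is a correct fleshing-out of that step rather than a different approach.
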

\begin{proof}
That $\Act7$ is analytic is obvious. Obviously $\Stb{7,e}\subseteq C_G(e)$. If $u\in C_G(e)$ then $ueu^{-1}=e$ and so $eue=eu$, and hence $u\in C_G^l(e)$.  Therefore $u\in \Stb{7,e}$ implying that $C_G(e)\subseteq \Stb{7,e}$. Item (3) follows from Lemma \ref{LemmaELeM}.
\end{proof} 
\begin{thm}\label{Th.vii.5.1a}
Let $e\in E(M)$. 
\begin{enumerate}
\item
$E(L^M_e)$   is a  submanifold  of $E(L_e)$ and also of $E(D^M_e)$.
\item
$C_G(e)$ is an analytic subgroup of $C^l_G(e)$.
\item
$\Map{7,e}$ is an isomorphism of manifolds.
\end{enumerate}
\end{thm}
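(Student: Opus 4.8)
The plan is to read all three parts off the orbit structure of the single analytic action $\Act7$ of $C_G^l(e)$ on the affine manifold $E(L_e)$, in exact parallel with the treatment of the $\GD^M$-classes. By Corollary \ref{Cor.vii.4.1a}, $C_G^l(e)$ is an analytic subgroup of $G$ and hence is itself an analytic group, while $E(L_e)$ is an affine space of dimension $k(n-k)$, where $k=\Rank(e)$, and so is a manifold. Lemma \ref{LemmaA7} supplies the three facts I need about $\Act7$: it is analytic, its stabiliser at $e$ is $\Stb{7,e}=C_G(e)$, and its orbit through $e$ is $\Act7(C_G^l(e),e)=E(L_e^M)$.

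Item (2) is then immediate. Since the analytic group $C_G^l(e)$ acts analytically on the manifold $E(L_e)$, Theorem \ref{Thm1} shows that the stabiliser $\Stb{7,e}$ is an analytic subgroup of $C_G^l(e)$, and by Lemma \ref{LemmaA7}(2) this stabiliser equals $C_G(e)$.

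The substantive step is to verify the hypothesis of Theorem \ref{Thm2}, namely that the orbit $E(L_e^M)$ is locally closed in $E(L_e)$. In the $\GD^M$-class case this needed the delicate Zariski computation of Lemma \ref{Lemma.vii.3.3}, but here the algebraicity of $M$ makes it routine: by Lemma \ref{Lemma.vii.3.1}(1) we have $L_e^M=L_e\cap M$, so an idempotent lies in $L_e^M$ precisely when it lies in both $L_e$ and $M$, that is, $E(L_e^M)=E(L_e)\cap M$. Since $M$ is an algebraic set it is closed in $M_n$ in the euclidean topology, whence $E(L_e)\cap M$ is closed, and a fortiori locally closed, in $E(L_e)$. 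Theorem \ref{Thm2} now delivers simultaneously that $E(L_e^M)$ is a submanifold of $E(L_e)$, which is the first assertion of (1), and that $\Map{7,e}:C_G^l(e)/\Stb{7,e}\to E(L_e^M)$ is an isomorphism of manifolds; substituting $\Stb{7,e}=C_G(e)$ gives (3).

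It remains to improve the conclusion from a submanifold of $E(L_e)$ to a submanifold of $E(D_e^M)$, and for this I would pass through $M_n$ and invoke (16.8.6.1) of \cite{Dieu:72}, exactly as was done for the corresponding corollaries earlier in the paper. Since $E(L_e)$ is an affine space it is a submanifold of $M_n$, so by transitivity $E(L_e^M)$ is a submanifold of $M_n$; likewise $E(D_e^M)$ is a submanifold of $M_n$, being a submanifold of $E(D_e)$ (already proved in this section), which in turn is a submanifold of $M_n$ by \cite{kris:00}. As $L_e^M\subseteq D_e^M$ forces $E(L_e^M)\subseteq E(D_e^M)$, these two submanifolds of $M_n$ are nested, and (16.8.6.1) of \cite{Dieu:72} yields that $E(L_e^M)$ is a submanifold of $E(D_e^M)$, completing (1). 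I do not anticipate a genuine obstacle anywhere: the one point that could in principle be delicate, the local closedness of the orbit, dissolves into the identity $E(L_e^M)=E(L_e)\cap M$ together with the closedness of the algebraic set $M$.
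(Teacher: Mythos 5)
Your proof is correct, but it reaches the crucial hypothesis of Theorem \ref{Thm2} --- local closedness of the orbit $E(L_e^M)$ in $E(L_e)$ --- by a genuinely different and more elementary route than the paper. The paper invokes Proposition 5.8 of \cite{Putc:86} to assert that $E(L_e^M)$ is an irreducible algebraic subset of $M$, hence an algebraic set, and then uses Proposition \ref{Prop.vii.1.1} (locally compact $\Rightarrow$ locally closed); you instead observe that regularity gives $L_e^M = L_e \cap M$ (Lemma \ref{Lemma.vii.3.1}), whence $E(L_e^M) = E(L_e) \cap M$ is actually \emph{closed} in $E(L_e)$ because the algebraic set $M$ is closed in $M_n$. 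Your identity is valid, and your argument avoids any appeal to Putcha's structural result --- but it is worth noting why the paper may have preferred its route: the same Proposition 5.8 argument was already used for $E(D_e^M)$, where your shortcut is unavailable, since $D_e^M \neq D_e \cap M$ in general (the paper's own example of two rank-$2$ idempotents $e,f$ with $E(D_e^M)=\{e\}$ but $\dim E(D_f^M)=2$ shows a single $\GD$-class of $M_n$ meeting $M$ in several $\GD^M$-classes). So the paper's method is uniform across the $\GL$-, $\GR$- and $\GD$-cases and yields irreducibility as a by-product, while yours is simpler but exploits a feature special to $\GL^M$ and $\GR^M$. Beyond that, your treatment of items (2) and (3) via Theorem \ref{Thm1}, Lemma \ref{LemmaA7} and Theorem \ref{Thm2} matches the paper, and you in fact do more than the paper's proof: the published proof never addresses the claim in (1) that $E(L_e^M)$ is a submanifold of $E(D_e^M)$, whereas you supply it cleanly by nesting both sets as submanifolds of $M_n$ (using that $E(L_e)$ is an affine subspace and that $E(D_e^M)$ is a submanifold of the submanifold $E(D_e)$ of $M_n$, by \cite{kris:00}) and applying (16.8.6.1) of \cite{Dieu:72}, exactly as the paper did for $L_a^M \subseteq L_a$.
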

\begin{proof}
It is known that 
$E(L^M_e)$ is an irreducible algebraic subset
of $M$ (Proposition 5.8 \cite{Putc:86}).  Hence it is itself an algebraic set.
So, by Lemma \ref{Prop.vii.1.1},
it is a locally compact space and hence it is locally closed also. Since $E(L_e^M)$ is the orbit of $e$ under the action $\Act7$, by Theorem \ref{Thm2},
$E(L^M_e)$ is a submanifold of $E(L_e)$. Theorem \ref{Thm2} also implies that  $\Map{7,e}$ is an isomorphism of manifolds.
\end{proof}

We have a corresponding result involving $\GR$-classes.
In this case we
consider the  action 
$$
\Act8:C_G^r(e) \times E(R_e) \rightarrow E(R_e),
\quad 
(u,f) \mapsto ufu^{-1}.
$$
%
%

We have some results involving the dimensions of the various manifolds
considered in this section.
\begin{thm}\label{Th.vii.5.2}
Let $e\in E(M)$. Then
\begin{enumerate}
\item
$ \dim E(L^M_e)=\dim \Gre$.
\item
$\dim E(R^M_e)=\dim \Gle$. 
\end{enumerate}
\end{thm}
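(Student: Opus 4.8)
The plan is to compute all four dimensions from the homogeneous-space descriptions already obtained and to reduce both assertions to a single identity among the three centralisers $\Cle$, $\Cre$ and $C_G(e)=\Cle\cap\Cre$. By Theorem \ref{Th.vii.5.1a} the map $\Map{7,e}$ identifies $E(L^M_e)$ with $\Cle/C_G(e)$, and its $\GR$-analogue (via $\Act8$) identifies $E(R^M_e)$ with $\Cre/C_G(e)$; hence $\dim E(L^M_e)=\dim\Cle-\dim C_G(e)$ and $\dim E(R^M_e)=\dim\Cre-\dim C_G(e)$. On the other hand, Theorem \ref{Th.vii.4.1a} with Corollary \ref{Cor.vii.4.1a} gives $\Gle\cong G/\Cle$, and the $\GL$-analogue using $\Act5$ gives $\Gre\cong G/\Cre$; hence $\dim\Gle=\dim G-\dim\Cle$ and $\dim\Gre=\dim G-\dim\Cre$. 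Substituting, assertion (1) becomes $\dim\Cle-\dim C_G(e)=\dim G-\dim\Cre$ and assertion (2) becomes $\dim\Cre-\dim C_G(e)=\dim G-\dim\Cle$; both are equivalent to the single relation
\[
\dim\Cle+\dim\Cre=\dim G+\dim C_G(e),
\]
which I shall call $(\star)$.

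So everything rests on $(\star)$. To prove it I would exploit that $\Cle$ and $\Cre$ are the stabilisers, under the analytic actions $\Act4$ and $\Act5$, of the two complementary subspaces $\rng$ and $\nul$: indeed $\Stb{4,\rng}=\Cle$ by Proposition \ref{Prop.4.7}, and symmetrically $\Stb{5,\nul}=\Cre$, while $C_G(e)=\Cle\cap\Cre$. Consider the orbit $O=\{\Act4(u,\rng):u\in\Cre\}$ of $\rng$ under the subgroup $\Cre$ acting via $\Act4$. Since the $\GR$-analogue of $\Act7$ sends $u\mapsto ueu^{-1}$ from $\Cre$ onto $E(R^M_e)$ and $\rng[ueu^{-1}]=\Act4(u,\rng)$, this orbit is precisely the image of $E(R^M_e)$ under the range map $x\mapsto\rng[x]$; by orbit--stabiliser (its stabiliser in $\Cre$ is $\Cre\cap\Cle=C_G(e)$, cf.\ Theorem \ref{Thm2}) it is a submanifold of $\Gle$ of dimension $\dim\Cre-\dim C_G(e)$. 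If this orbit is open in $\Gle$, then its dimension equals $\dim\Gle=\dim G-\dim\Cle$, which is exactly $(\star)$ and hence assertion (2); assertion (1) follows symmetrically from the orbit of $\nul$ under $\Cle$ via $\Act5$.

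The crux, and the step I expect to be the main obstacle, is therefore the openness of this orbit in $\Gle$, i.e.\ the transversality that $\Cle\cdot\Cre$ is dense in $G$. This cannot follow from dimension counting alone, which would be circular, and it genuinely uses the hypothesis that $M$ is regular: by Putcha's structure theory \cite{Putc:86} the unit group $G$ of a regular irreducible algebraic monoid is reductive, and for reductive $G$ the subgroups $\Cle=\Stab_G(\rng)$ and $\Cre=\Stab_G(\nul)$ are opposite parabolic subgroups with common Levi factor $C_G(e)$, so the $\Cre$-orbit of the base point of $G/\Cle$ is the dense open Bruhat cell. A more semigroup-theoretic alternative to $(\star)$ is to coordinatise the regular $\GD^M$-class $D^M_e$ by a Rees matrix over the group $H^M_e$: the idempotents then correspond to those pairs of $\GR^M$- and $\GL^M$-classes whose sandwich entry is invertible, an open subset of $\Gre\times\Gle$, giving $\dim E(D^M_e)=\dim\Gle+\dim\Gre$ directly; combined with the fibrations $E(D^M_e)\to\Gle$ and $E(D^M_e)\to\Gre$ (with fibres $E(L^M_e)$ and $E(R^M_e)$) this again yields both assertions.
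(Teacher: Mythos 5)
Your opening reduction is sound, and it is consistent with machinery the paper itself establishes: $\Map{7,e}$ identifies $E(L^M_e)$ with $\Cle/C_G(e)$ (Theorem \ref{Th.vii.5.1a}), the $\Act{8}$-analogue identifies $E(R^M_e)$ with $\Cre/C_G(e)$, and $\Map{4,\rng}$ together with the $\Act{5}$-analogue identify $\Gle\cong G/\Cle$ and $\Gre\cong G/\Cre$; so the theorem is indeed equivalent to your relation $(\star)$: $\dim\Cle+\dim\Cre=\dim G+\dim C_G(e)$. The genuine gap is exactly where you predicted it. Your proof of $(\star)$ rests on the claim that the unit group of a regular irreducible algebraic monoid is reductive; that equivalence (Putcha) holds for irreducible monoids \emph{with zero}, and the paper assumes no zero. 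Any connected non-reductive group is itself a regular irreducible algebraic monoid, and for an example with nontrivial idempotent structure take $M=B'\times M_2(\fldc)$, where $B'=\left\{\left[\begin{smallmatrix}u&0\\0&t\end{smallmatrix}\right]: u \text{ upper triangular}, \ t\det(u)=1\right\}$ is a closed realisation of a Borel subgroup of $\gl[2]$: this $M$ is regular and irreducible, $E(M)$ is nontrivial, and $G\cong B'\times\gl[2]$ has nontrivial unipotent radical, so the opposite-parabolic/Bruhat-cell theory you invoke does not apply as stated. Worse, on Lie-algebra grounds the openness of the $\Cre$-orbit in $\Gle$ says precisely that $\dim\Cle+\dim\Cre-\dim C_G(e)=\dim G$, i.e.\ it is \emph{equivalent} to $(\star)$; so without an independent argument valid in the paper's generality, your crux step is circular.

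For contrast, the paper's proof uses no Lie theory at this point: by Theorem 4.4 of \cite{kris:00} the map $f\mapsto\rng[f]$ is a homeomorphism of $E(R_e)$ onto an open set $U_W\subseteq\gk$, and a Green's-relations argument (if $\rng[f]=\rng[a]$ with $f\in E(R_e)$, $a\in R^M_e$, then $f\,\GH\,a$ in $M_n$, and since $H^M_a$ is a subgroup of $H_a$ its identity is $f$, forcing $f\in M$ with $f\,\GR^M\,e$) shows that the image of $E(R^M_e)$ is exactly $U_W\cap\Gle$, an open subset of $\Gle$; the dimension equality follows at once, with $E(L^M_e)$ handled symmetrically. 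Your second, Rees-style sketch is much closer to a correct noncircular argument than your main route: the paper itself proves (first half of Theorem \ref{Th.vii.5.4}, independently of Theorem \ref{Th.vii.5.2}) that $f\mapsto(\rng[f],\nul[f])$ is a homeomorphism of $E(D^M_e)$ onto the open subset $\{(W,N):W\oplus N=V\}$ of $\Gle\times\Gre$ --- the nontrivial point your sketch omits being that every complementary pair is realised by an idempotent of $M$, via the group $\GH$-class $L^M_a\cap R^M_b$, which is where regularity enters. Combining that with the four homogeneous-space identifications gives $\dim G-\dim C_G(e)=\dim E(D^M_e)=\dim\Gle+\dim\Gre=2\dim G-\dim\Cle-\dim\Cre$, which is $(\star)$. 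So your reduction can be completed along your alternative route, but not by the appeal to reductivity.
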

\begin{proof}
Let $\rng[e]=W$. Then   $U_W=\{\rng[f]:f\in E(R_e)\}$ is an
open set in \gk\ and the map $\Ga_k^e:E(R_e)\rightarrow U_W$
defined by $f\mapsto \rng[f]$ is a homeomorphism (see Theorem 4.4 \cite{kris:00}).

Next, let $U^M_W=\{\rng[f]:f\in E(R^M_e)\}$. Then the restriction
$\Ga_k^e\big|E(R^M_e)$ is a homeomorphism of $E(R^M_e)$ onto $U^M_W$.

We now show that $U^M_W=U_W\cap \Gle$ so that $U^M_W$ is an open set in $\Gle$.
Let $W'\in U_W\cap \Gle$. Then $W'=\rng[f]=\rng[a]$ for some $f\in E(R_e),a\in
R^M_e$. Since $\rng[f]=\rng[a]$, we have   $f\GL a$ in $M_n(\fldk)$.
Since $a\,\GR\,e$ in $M$ we have $a\,\GR\,e$ in $M_n(\fldk)$
(see Proposition II.4.5 \cite{Howi:76}).
We also have $f\,\GR\,e$ in $M_n(\fldk)$. Thus
$f\,\GH\,a$ in $M_n(\fldk)$. Since $f$ is an idempotent, $H_a$ is a
subgroup of $M_n(\fldk)$ with identity $f$. Therefore $H^M_a$ is a subgroup of
$M$  and $H^M_a\subseteq H_a$. This implies that $f\in H^M_a$
and so $f\,\GR\,a\,\GR\,e$ in $M$. Hence $W'=\rng[f]\in  U^M_W$.
Therefore $U_W\cap \Gle\subseteq U^M_W$. The reverse inclusion is obvious.

Therefore $\Ga_k^e\big|E(R^M_e)$ is a homeomorphism of $E(R^M_e)$ onto an open set in
$\Gle$. From this it follows that $\dim E(R^M_e)=\dim \Gle$.
The proof  of the other result is similar.
\end{proof}

If $e\in E(M_n)$ then we have (see Proposition 3.1, Theorem 5.3 \cite{kris:00})
$$
\dim E(D_e)=\dim E(L_e)+\dim E(R_e).
$$
The next theorem is a generalization of this result.
\begin{thm}\label{Th.vii.5.4}
Let $e\in E(M)$. Then 
$$
\dim E(D^M_e)= \dim E(L^M_e)+\dim E(R^M_e).
$$
\end{thm}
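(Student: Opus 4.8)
The plan is to convert all three dimensions into dimensions of homogeneous spaces and thereby reduce the statement to a single dimension identity among the centralisers of $e$ in $G$. By the isomorphisms of manifolds already obtained---$\Map{6,e}$ for $E(D^M_e)$, $\Map{7,e}$ for $E(L^M_e)$, and the $\GR$-analogue $\Map{8,e}$ attached to the action $\Act8$ for $E(R^M_e)$---the three orbit manifolds are isomorphic to $G/C_G(e)$, $\Cle/C_G(e)$ and $\Cre/C_G(e)$ respectively. Since for an analytic group acting with locally closed orbit the quotient manifold $G/S$ has dimension $\dim G-\dim S$, this gives $\dim E(D^M_e)=\dim G-\dim C_G(e)$, $\dim E(L^M_e)=\dim\Cle-\dim C_G(e)$, and $\dim E(R^M_e)=\dim\Cre-\dim C_G(e)$. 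Feeding these into the asserted equation, the whole theorem collapses to the purely group-theoretic identity $\dim G+\dim C_G(e)=\dim\Cle+\dim\Cre$.

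To prove this identity I would pass to the Lie algebras (tangent spaces at $1$), writing $\mathsf L(H)$ for the Lie algebra of an analytic subgroup $H$ of $G$ and recalling $\dim H=\dim\mathsf L(H)$. The three centralisers are cut out in $G$ by the equations $eue=eu$, $ue=eue$ and $ue=eu$; differentiating these relations at the identity identifies their Lie algebras with the kernels, inside $\mathsf L(G)$, of the linear maps $\lambda:X\mapsto eX-eXe$ and $\rho:X\mapsto Xe-eXe$. Thus $\mathsf L(\Cle)=\ker\lambda$ and $\mathsf L(\Cre)=\ker\rho$, while the relation $C_G(e)=\Cle\cap\Cre$ gives $\mathsf L(C_G(e))=\ker\lambda\cap\ker\rho$. \emph{This Lie-algebra identification is the delicate point:} a priori the tangent space is only contained in the kernel of the differential, and one must check it fills the kernel. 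That it does follows because $eX=eXe$ forces $eX^m=eX^me$ for every $m$ by induction, so $e\exp(tX)=e\exp(tX)e$ and $\exp(tX)$ stays in $\Cle$; the same device handles $\rho$.

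The identity among the kernels is then an exercise in rank--nullity, made transparent by a block computation. Fixing a basis adapted to $V=\rng[e]\oplus\nul[e]$, in which $e=\left[\begin{smallmatrix}I&0\\0&0\end{smallmatrix}\right]$ and a general $X\in\mathsf L(G)$ reads $\left[\begin{smallmatrix}a&b\\c&d\end{smallmatrix}\right]$, one finds $\lambda(X)=\left[\begin{smallmatrix}0&b\\0&0\end{smallmatrix}\right]$ and $\rho(X)=\left[\begin{smallmatrix}0&0\\c&0\end{smallmatrix}\right]$. Hence $\operatorname{im}\lambda$ sits in the $(1,2)$-block and $\operatorname{im}\rho$ in the $(2,1)$-block, so these images meet only in $0$; consequently $\lambda+\rho$ has kernel exactly $\ker\lambda\cap\ker\rho=\mathsf L(C_G(e))$ and image $\operatorname{im}\lambda\oplus\operatorname{im}\rho$, whence $\operatorname{rank}(\lambda+\rho)=\operatorname{rank}\lambda+\operatorname{rank}\rho$. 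Applying rank--nullity to $\lambda$, to $\rho$ and to $\lambda+\rho$ on $\mathsf L(G)$ and combining yields $\dim\mathsf L(G)+\dim\mathsf L(C_G(e))=\dim\ker\lambda+\dim\ker\rho$, which is precisely the centraliser identity needed. The main obstacle is thus not this linear algebra but the verification that each centraliser is smooth with tangent space equal to the full kernel of the corresponding differential; for the general analytic group $G$ this is exactly what the exponential computation secures, whereas in the transparent case $M=M_n$ (where the centralisers are linear-algebraic and the identity is the block dimension count $k^2+k(n-k)+(n-k)^2$ on each side) it is automatic.
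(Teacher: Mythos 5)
Your reduction is the right kind of move and is itself sound: via the manifold isomorphisms $\Map{6,e}$, $\Map{7,e}$ and the $\GR$-analogue $\Map{8,e}$, one does get $E(D^M_e)\cong G/C_G(e)$, $E(L^M_e)\cong \Cle/C_G(e)$, $E(R^M_e)\cong \Cre/C_G(e)$, so Theorem \ref{Th.vii.5.4} is equivalent to $\dim G+\dim C_G(e)=\dim \Cle+\dim \Cre$; and your exponential argument correctly identifies $\mathsf{L}(\Cle)=\ker\lambda$, $\mathsf{L}(\Cre)=\ker\rho$ and $\mathsf{L}(C_G(e))=\ker\lambda\cap\ker\rho$ inside $\mathsf{L}(G)$. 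The genuine gap is the final step, and it sits exactly where you declare the argument ``transparent''. From $\operatorname{im}\lambda\cap\operatorname{im}\rho=\{0\}$ you may conclude $\ker(\lambda+\rho)=\ker\lambda\cap\ker\rho$, but \emph{not} that $\operatorname{im}(\lambda+\rho)=\operatorname{im}\lambda\oplus\operatorname{im}\rho$: take $\lambda,\rho\colon \fldc\to\fldc^2$ with $\lambda(t)=(t,0)$, $\rho(t)=(0,t)$; the images meet only in $0$, yet $\operatorname{rank}(\lambda+\rho)=1\neq 2$. In fact, given the kernel identity, the claim $\operatorname{rank}(\lambda+\rho)=\operatorname{rank}\lambda+\operatorname{rank}\rho$ is, by rank--nullity, \emph{equivalent} to $\dim\ker\lambda+\dim\ker\rho=\dim\mathsf{L}(G)+\dim(\ker\lambda\cap\ker\rho)$, i.e.\ to $\ker\lambda+\ker\rho=\mathsf{L}(G)$ --- which is precisely the centraliser identity you set out to prove, so the step is circular. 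Your block computation only locates the two images in the $(1,2)$- and $(2,1)$-blocks; it does not show that a pair of blocks $(b(X),c(Y))$ realized \emph{separately} by elements of $\mathsf{L}(G)$ is realized \emph{simultaneously} by a single element. That is automatic when $\mathsf{L}(G)=M_n$, but for a general regular irreducible $M$ the subalgebra $\mathsf{L}(G)$ need not be the sum of its two ``block-triangular'' subalgebras $\ker\lambda$ and $\ker\rho$; this decomposition, not the smoothness of the centralisers (your exponential verification of that point is fine), is where the monoid structure has to enter.

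What is missing is therefore a proof that $\mathsf{L}(\Cle)+\mathsf{L}(\Cre)=\mathsf{L}(G)$, equivalently that $\Cle\,\Cre$ contains a neighbourhood of $1$ in $G$. When $G$ is reductive this is a theorem of Putcha \cite{Putc:86}: $\Cle$ and $\Cre$ are opposite parabolic subgroups with $\Cle\cap\Cre=C_G(e)$, and the big cell $\Cle\Cre$ is open dense, which gives the dimension count; but Theorem \ref{Th.vii.5.4} is asserted for arbitrary regular irreducible $M$, where $G$ need not be reductive, and you supply no argument in that generality. The paper sidesteps the group-theoretic identity entirely: it shows that $f\mapsto(\rng[f],\nul[f])$ maps $E(D^M_e)$ homeomorphically onto $Z^M_e=Z_k\cap(\Gle\times\Gre)$, an \emph{open} subset of $\Gle\times\Gre$, whence $\dim E(D^M_e)=\dim\Gle+\dim\Gre$, and then invokes Theorem \ref{Th.vii.5.2} ($\dim E(L^M_e)=\dim\Gre$, $\dim E(R^M_e)=\dim\Gle$). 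Amusingly, combining the paper's proof with your valid reductions \emph{establishes} your centraliser identity $\dim G+\dim C_G(e)=\dim\Cle+\dim\Cre$ in full generality; but as a self-contained argument your proposal is incomplete until $\ker\lambda+\ker\rho=\mathsf{L}(G)$ is proved independently.
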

\begin{proof}
Let 
\begin{align*}
Z^M_e & = \{(\rng[f],\nul[f])\,:\, f\in E(D^M_e)\} ,\\
Z'    & =\{(W,N)\in \Gla\times \Gra\,:\, W\oplus N=V\}.
\end{align*}
We first show that $Z_e^M=Z'$.

Obviously $Z^M_e\subseteq Z'$. Conversely, let $(W,N)\in Z'$. Let $a,b\in
D^M_e$ be such that $\rng[a]=W,\nul[b]=N$ and consider $H=L^M_a\cap R^M_b$.
Let $x,y\in H$. Now $\rng[x]=\rng[y]=W$ and $\nul[x]=\nul[y]=N$ and $W\oplus
N=V$. Hence $\rng[xy]=W$ and $\nul[xy]=N$. By Lemma \ref{Lemma.vii.4.1} we now
have $xy\in L^M_a\cap R^M_b =H$. Therefore $H$ is a subgroup of $M$
(see Theorem 2.16 \cite{Clif+:61}).
If $g$ is the idempotent in $H$ then, obviously, $g\in
E(D^M_e)$ and $W=\rng[g],N=\nul[g]$. This proves that $(W,N)\in Z^M_e$ showing
that $Z'\subseteq Z^M_e$.

Now, let $\Rank(e)=k$ and 
$$
Z_k=\{(W,N)\in \gk\times\gk[n-k]\,:\, W\oplus N=V\}.$$
By Theorem 5.6 \cite{kris:00}, $Z_k$ is an open subset of
$\gk\times\gk[n-k]$ and the map $\zeta:f\mapsto (\rng[f],\nul[f])$ is a
homeomorphism of $E(D_e)$ onto $Z_k$. The restriction $\zeta\big |E(D^M_e)$ is
a homeomorphism of $E(D^M_e)$ onto $Z^M_e$. 

Since $\Gla\subseteq \gk$ and $\Gra\subseteq \gk[n-k]$ the  topology on the
product space
$\Gla \times \Gra$ is the subspace topology inherited from the product space 
$\gk\times\gk[n-k]$. Since $Z_k$ is open in $\gk\times \gk[n-k]$ and since
$Z^M_e=Z_k\cap (\Gla\times \Gra)$, $Z^M_e$ is an open set in $\Gla\times \Gra$.
Thus $E(D^M_e)$ is homeomorphic to an open set in $\Gla\times\Gra$. From this
we have
$$
\dim E(D^M_e)=\dim \Gla+\dim \Gra.
$$
 The theorem now follows from  Theorem
\ref{Th.vii.5.2}.
\end{proof}


\begin{thebibliography}{99}
\bibitem{Clif+:61}
A. H. Clifford and G. B. Preston, {\em Algebraic theory of semigroups Vol. I},
 American Mathematical Society, Providence, Rhode Island, 1979.
\bibitem{carr}
J. H. Carruth et al. {\em The theory of topological semigroups Vols. I and II},
Marcel Dekker, New York, 1983.
\bibitem{Dieu:70}
J. Dieudonne, {\em Treatise on analysis Vol. II},
Academic Press, New York, New York, 1970, Translated in English from French original. 
\bibitem{Dieu:72}
J. Dieudonne, {\em Treatise on analysis Vol. III},
Academic Press, New York, New York, 1970, Translated in English from French original 
by I. C. Macdonald. 
\bibitem{Dugu:66}
J. Dugundji, {\em Topology}, Allyn and Bacon, Boston, 1966.
\bibitem{Howi:76}
J. M. Howie, {\em An introduction to semigroup theory}, Academic Press, London, 1976.
\bibitem{kris:00}
V. N. Krishnachandran and K. S. S. Nambooripad, {\em Topology of the semigroup of 
singular endomorphisms}, Semigroup Forum (2000), 224-248.
\bibitem{Putc:86}
Mohan S. Putcha, {\em Linear algebraic monoids}, 
Cambridge University Press, Cambridge, 1986, 
London Mathematical Society Lecture Note Series.
\bibitem{Renn:05}
Lex E. Renner, {\em Linear algebraic monoids}, (Encyclopedia of Mathematical Sciences, Volume 134), Springer, New York, 2005.
\bibitem{Vara:74}
V. S. Varadarajan, {\em Lie groups, Lie algebras and their representations},
Prentice-hall, Englewood Cliffs, 1974.
\end{thebibliography}
\end{document}